\numberwithin{equation}{section}
\pgfplotsset{compat=1.14}
\theoremstyle{plain}
\newtheorem{thm}{Theorem}[section]
\newtheorem{lm}[thm]{Lemma}
\newtheorem{prop}[thm]{Proposition}
\newtheorem{conj}[thm]{Conjecture}
\newtheorem{prb}[thm]{Problem}
\newtheorem*{main}{Main Theorem}
\theoremstyle{definition}
\newcommand{\QQ}{\mathbb{Q}}
\newcommand{\RR}{\mathbb{R}}
\newcommand{\CC}{\mathbb{C}}
\newcommand{\A}{\mathcal{A}}
\newcommand{\B}{\mathcal{B}}
\newcommand{\C}{\mathcal{C}}
\newcommand{\e}{\mathrm{e}}
\newcommand{\I}{\mathrm{i}}
\DeclareMathOperator{\tr}{tr}
\def\iddots{\mathinner{\mkern1mu\raise\p@
    \vbox{\kern7\p@\hbox{.}}\mkern2mu
    \raise4\p@\hbox{.}\mkern2mu\raise7\p@\hbox{.}\mkern1mu}}
\title[Minkowski sums of a twisted cubic segment]{Semi-algebraic properties of Minkowski sums of a twisted cubic segment}
\author{Arthur Bik}
\address{Universit\"at Bern, Mathematisches Institut, Alpeneggstrasse~22, 3012 Bern, Switzerland}
\email{arthur.bik@math.unibe.ch}
\author{Adam Czapli\'nski}
\address{Universit\"at Siegen, Department Mathematik, Walter-Flex-Stra{\ss}e~3, 57068 Siegen, Germany}
\email{adam.czaplinski@uni-siegen.de}
\author{Markus Wageringel}
\address{Universit\"at Osnabr\"uck, Institut f\"ur Mathematik, Albrechtstraße~28\,A, 49076 Osnabr\"uck, Germany}
\email{markus.wageringel@uos.de}
\thanks{The first author was supported by the Vici grant \emph{Stabilisation in Algebra and Geometry} from the Netherlands Organisation for Scientific Research (NWO). The third author was supported by DFG grant GK 1916 \emph{Kombinatorische Strukturen in der Geometrie} of the German Research Foundation.}
\subjclass[2010]{14P10}
\keywords{semi-algebraic sets, implicitization, twisted cubic}
\begin{document}

\begin{abstract}
We find a semi-algebraic description of the Minkowski sum $\A_{3,n}$ of $n$ copies of the twisted cubic segment $\{(t,t^2,t^3)\mid -1\leq t\leq 1\}$ for each integer~$n\geq3$. These descriptions provide efficient membership tests for the sets~$\A_{3,n}$. These membership tests in turn can be used to resolve some instances of the underdetermined matrix moment problem, which was formulated by Michael Rubinstein and Peter Sarnak in order to study problems related to $L$-functions and their zeros.
\end{abstract}
\maketitle

\section{Introduction}

The zeros of $L$-functions are known to be able to describe various geometrical and arithmetical objects and are the subjects of several conjectures (cf. \cite{Newmansconj15,Sixshortchap,ZerodistrDir16}). For example, the Generalized Riemann Hypothesis conjectures that all non-trivial zeros of an $L$-function have real part~$\frac{1}{2}$ and the Grand Simplicity Hypothesis asserts that the imaginary parts of zeros of Dirichlet $L$-functions are linearly independent over~$\QQ$ (cf. \cite{Mi09}). $L$-functions can also be encountered in proofs of the Prime Number Theorem (cf. \cite{UAnzdPrimRiem}) and in primality tests (cf. \cite{RHaTfP76}).\bigskip

Let $\rho$ be an automorphic cusp form and let $L(s,\rho)$ be its standard $L$-function. A conjecture, which has been verified in many cases, states that under certain conditions the function $L(s,\rho)$ has an
analytic continuation $\Lambda(s,\rho)$ that satisfies the functional equation
\[
\Lambda(1-s,\rho)=W(\rho)N_\rho^{s-1/2}\Lambda(s,\rho),
\]
where $N_\rho$ is the conductor of $\rho$ and $W(\rho)$ is either $1$ or $-1$. The sign $W(\rho)\in\{\pm1\}$ is called the root number of $\rho$. The problems of computing root numbers and counting the zeros of $L$-functions are related to the following problem with $\log N_\rho\approx n$.

\begin{prb}[The underdetermined matrix moment problem]\label{prb:undermat}
Determine the possible sets of eigenvalues of a real orthogonal $(2n+1)\times(2n+1)$ matrix $A$ given its first $k\leq n$ moments $\tr(A),\tr(A^2),\dots,\tr(A^k)$.
\end{prb}

This problem is the object of study in the paper~\cite{RuSa_prep} by Michael Rubinstein and Peter Sarnak. For the full background and relevance of the problem, we refer to this paper.\bigskip

Let $A$ be a real orthogonal $(2n+1)\times(2n+1)$ matrix. Then its eigenvalues are
\[
\det(A),\e^{\I\theta_1},\e^{-\I\theta_1},\dots,\e^{\I\theta_n},\e^{-\I\theta_n}
\]
for some $\theta_1,\dots,\theta_n\in[0,\pi]$. And conversely, any such sequence is the spectrum of a real orthogonal $(2n+1)\times(2n+1)$ matrix. We have 
\[
\tr(A^j)=\det(A)^j+2\sum_{i=1}^n\cos(j\theta_i)
\]
and $\cos(j\theta_i)=T_j(\cos(\theta_i))$ for all integers $j\geq 1$, where $T_j$ is the $j$-th Chebyshev polynomial of the first kind. The polynomial $T_j(x)$ has degree $j$. So, given $\det(A)$ and $\tr(A),\tr(A^2),\dots,\tr(A^k)$ for some integer $k\leq n$, we can compute $\sum_{i=1}^n\cos(\theta_i)^j$ for each $j\in\{1,\dots,k\}$ using Gaussian elimination on the coefficient vectors of $T_1,\dots,T_k$. As $\det(A)\in\{\pm1\}$ only has finitely many possible values, we write $t_i=\cos(\theta_i)\in[-1,1]$ and see that Problem~\ref{prb:undermat} reduces to the following problem.

\begin{prb}[The moment curve problem]\label{prb:momcurve}
Determine the set
\[
\left\{(t_1,\dots,t_n)\in[-1,1]^n\,\middle|\,\sum_{i=1}^nt_i^j=x_j\mathrm{~for~all~}j\in\{1,\dots,k\}\right\}
\]
given the real numbers $x_1,\dots,x_k\in\RR$.
\end{prb}

This problem was also formulated by Michael Rubinstein and Peter Sarnak. Note that given the first $k$ power sums of $t_1,\dots,t_n$, we can compute all symmetric polynomial expressions in $t_1,\dots,t_n$ of degree at most $k$. So if $k=n$, then we are able to compute the coefficients of the polynomial $(x-t_1)\cdots(x-t_n)$, which not only allows us to recover $t_1,\dots,t_n$, but also shows that $t_1,\dots,t_n\in\CC$ are unique up to reordering. So we are interested in the case where $k<n$. In this case, Michael Rubinstein and Peter Sarnak propose the following strategy: consider the set $\C_k:=\{(t,t^2,\dots,t^k)\mid -1\leq t\leq 1\}\subseteq\RR^k$ and define 
\[
\A_{k,n}:=\underbrace{\C_k+\C_k+\dots+\C_k}_{n}
\]
to be the Minkowski sum of $n$ copies of $\C_k$ for each integer $n\geq 1$. Then we can determine the set of tuples $(t_1,\dots,t_n)\in[-1,1]^n$ such that 
\[
\sum_{i=1}^nt_i^j=x_j
\]
for all $j\in\{1,\dots,k\}$ recursively by first computing the set of $t_n\in[-1,1]$ such that
\[
(x_1,x_2,\dots,x_k)-(t_n,t_n^2,\dots,t_n^k)\in\A_{k,n-1}.
\]
In order to do the latter, we need an efficient membership test for the set $\A_{k,n}$ for all $n>k$. For $k\in\{1,2\}$, this is easy. In general, one way to get an efficient membership test would be to describe the sets $\A_{k,n}$ implicitly using only equalities and inequalities involving polynomial expressions in $x_1,\dots,x_k$ and unions---in other words, using semi-algebraic descriptions of the sets $\A_{k,n}$. In this paper, we provide exactly such descriptions in the case that $k=3$.

\section{Main results}\label{sect:mainres}

Let $n\geq3$ be a positive integer. Our first result describes the boundary of~$\A_{3,n}$. We need this result in order to prove the Main Theorem. However, it also provides us with a piecewise parametrization, which is useful for rendering a visualization of~$\A_{3,n}$. See Figure~\ref{fig:a34} for an example. 

\begin{figure}[ht]
  \includegraphics[width=\textwidth]{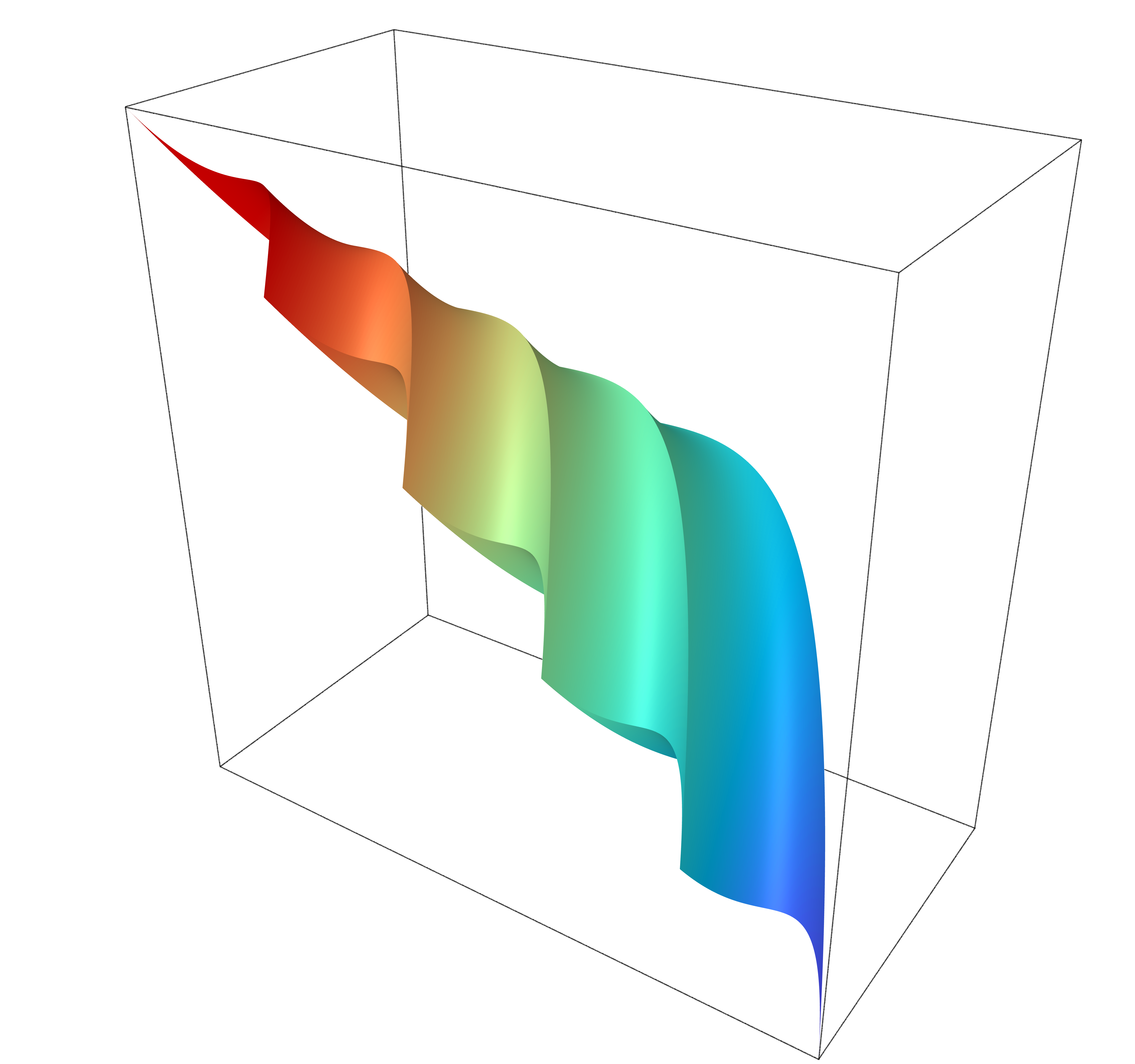}
  \put(-46,71){$(-5,0,-5)$}
  \put(-355,304){$(5,5,5)$}
  \put(-205,41){$x$}
  \put(-248,114){$y$}
  \put(-298,196){$z$}
  \caption{A rendering of the semi-algebraic set $\A_{3,5}$. Interactive 3D models of $\A_{3,n}$ are available at \url{https://mathsites.unibe.ch/bik/A3n.html} for $n=1,\dots,20$.}
  \label{fig:a34}
\end{figure}

Before we give the semi-algebraic description of $\A_{3,n}$, we first discuss the intuition behind it. As Figure~\ref{fig:a34} for $n=5$ and the interactive 3D models for $n=3,\dots,20$ demonstrate, the set $\A_{3,n}$ looks like an oyster with an upper and lower shell forming the boundary. We call these upper and lower shells $\B_n^+$ and $\B_n^-$ respectively. These two shells have identical projections to the $(x,y)$-plane, which we denote by $\B^{\flat}_n$, and both projection maps are one-to-one. This yields a first description of $\A_{3,n}$: for a point $(x,y,z)\in\RR^3$ to lie in $\A_{3,n}$, it is necessary that $(x,y)$ lies in $\B^{\flat}_n$. When this is the case, the point lies in $\A_{3,n}$ precisely when it lies below $\B_n^+$ and above~$\B_n^-$.\bigskip

Each of the two shells consists of $n-1$ spiraling ridges. These are the sets $\C_{k,n-k-1}^+$ and $\C_{\ell,n-\ell-1}^-$ defined below respectively. The projections of the ridges are easily visualized on $\B^{\flat}_n$. See Figure~\ref{fig:projection+}. We can now reformulate our first description of $\A_{3,n}$ in the following way: let $(x,y,z)\in\RR^3$ be a point such that $(x,y)$ lies in $\B^{\flat}_n$. Then $(x,y)$ lies in the projections of $\C_{k,n-k-1}^+$ and $\C_{\ell,n-\ell-1}^-$ for some $k,\ell$. The point $(x,y,z)$ lies in $\A_{3,n}$ precisely when is lies below $\C_{k,n-k-1}^+$ and above $\C_{\ell,n-\ell-1}^-$.\bigskip

Next, we think of $(x,y)$ as being fixed. This turns the conditions of lying below $\C_{k,n-k-1}^+$ and above $\C_{\ell,n-\ell-1}^-$ into conditions on $z$. The former condition is equivalent to $z$ being at most the biggest root of some parabola $f=Az^2+Bz+C$ with $A>0$ and can thus be expressed as $f(z)$ being at most $0$ or $z$ is at most the value $-B/2A$ where $f$ attains its minimum. See Figure~\ref{fig:fk1}. Similarly, the latter condition is equivalent to $z$ being at least the smallest root of a parabola $f=Az^2+Bz+C$ with $A>0$ and can be expressed as $f(z)$ being at most $0$ or $z$ is at least the value $-B/2A$ where $f$ attains its minimum. This is our description of $\A_{3,n}$.\bigskip

In order to state our results precisely, we define the following sets:
\begin{itemize}
\item for all integers $k,\ell\geq1$ and $a,b\geq0$, we take
\begin{align*}
\C^{+}_{k,a}&=\left\{k\begin{pmatrix}s\\s^2\\s^3\end{pmatrix}+\begin{pmatrix}t\\t^2\\t^3\end{pmatrix}\,\middle|\,-1\leq s\leq t\leq 1\right\}+a\begin{pmatrix}1\\1\\1\end{pmatrix},\\
\C^{-}_{\ell,b}&=\left\{\begin{pmatrix}s\\s^2\\s^3\end{pmatrix}+\ell\begin{pmatrix}t\\t^2\\t^3\end{pmatrix}\,\middle|\,-1\leq s\leq t\leq 1\right\}+b\begin{pmatrix}-1\\1\\-1\end{pmatrix},
\end{align*}
\item we take $\B^{+}_{n}=\bigcup_{k=1}^{n-1}\C^{+}_{k,n-k-1}$ and $\B^{-}_{n}=\bigcup_{\ell=1}^{n-1}\C^{-}_{\ell,n-\ell-1}$ and
\item we let $\B^{\flat}_{n}$ be the set consisting of all points $(x,y)\in\RR^2$ such that $ny\geq x^2$ and $y\leq n-1+(x+2i-(n-1))^2$ for each $i\in\{0,\dots,n-1\}$.
\end{itemize}
We also let $\pi\colon\RR^3\to\RR^2$ be the projection map sending $(x,y,z)\mapsto(x,y)$.

\begin{thm}\label{thm:boundaryA3n}
Let $(x,y)\in\B^{\flat}_{n}$ be a point and $z\in\RR$ be a real number. 
\begin{itemize}
\item[(a)] The boundary of $\A_{3,n}$ is the union of $\B^{+}_{n}$ and $\B^{-}_{n}$.
\item[(b)] We have $\pi(\B^{+}_{n})=\pi(\B^{-}_{n})=\B^{\flat}_{n}$.
\item[(c)] There exist unique numbers $z^{+},z^{-}\in\RR$ such that $(x,y,z^{+})\in\B^{+}_{n}$ and $(x,y,z^{-})\in\B^{-}_{n}$. We have $z^{+}\geq z^{-}$. Moreover, equality holds precisely when the point $(x,y)$ lies on the boundary of $\B^{\flat}_{n}$.
\item[(d)] We have $(x,y,z)\in\A_{3,n}$ if and only if $z^{+}\geq z\geq z^{-}$.
\item[(e)] Every point on the boundary of $\A_{3,n}$ can be written as 
\[
p=\begin{pmatrix}t_1\\t_1^2\\t_1^3\end{pmatrix}+\dots+\begin{pmatrix}t_n\\t_n^2\\t_n^3\end{pmatrix}
\]
for some tuple $(t_1,\dots,t_n)\in[-1,1]^n$. The set $\{t_1,\dots,t_n\}\setminus\{-1,1\}$ has at most two elements and the tuple $(t_1,\dots,t_n)$ is unique up to permutation of its entries.
\end{itemize}
\end{thm}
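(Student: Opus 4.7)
I would work with the parametrization $\phi\colon [-1,1]^n\to\RR^3$, $\phi(t_1,\dots,t_n) = \sum_i(t_i,t_i^2,t_i^3)$, so that $\A_{3,n} = \phi([-1,1]^n)$. The Jacobian at $(t_1,\dots,t_n)$ has columns $(1, 2t_i, 3t_i^2)$, and any three such columns with distinct entries span $\RR^3$ by Vandermonde. The key first step is: if some preimage of $p\in\A_{3,n}$ has three distinct values in $(-1,1)$, the inverse function theorem places $p$ in the interior. Contrapositively, every preimage of a boundary point has at most two distinct values in $(-1,1)$ and all other entries in $\{-1,1\}$; sorted, it has the form $(\underbrace{-1,\dots,-1}_{b},\underbrace{s,\dots,s}_{k},\underbrace{t,\dots,t}_{\ell},\underbrace{1,\dots,1}_{a})$ with $a+b+k+\ell=n$ and $-1\leq s\leq t\leq 1$.

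\textbf{Parts (a) and (e).} To decide which such tuples actually parametrize $\partial\A_{3,n}$, I would combine first- and second-order tangent-cone analyses. For $k,\ell\geq 1$, the first-order tangent plane $\Pi=\mathrm{span}\bigl((1,2s,3s^2),(1,2t,3t^2)\bigr)$ has normal $n_\Pi=(6st,-3(s+t),2)$, and the one-sided rays $-(1,2,3)$ (from a $+1$) and $(1,-2,3)$ (from a $-1$) pair with $n_\Pi$ as $-6(s-1)(t-1)<0$ and $6(s+1)(t+1)>0$. Hence if both $a,b\geq 1$, the tangent cone is all of $\RR^3$ and $p$ is interior. Assuming $b=0$, the first-order cone is the half-space $\{v\cdot n_\Pi\leq 0\}$, and I would examine the second-order ``splitting'' directions: splitting the $t$'s (if $\ell\geq 2$) yields a pure second-order direction $(0,1,3t)$ with positive normal component $3(t-s)>0$, escaping the half-space and together with the first-order cone covering a $3$-dimensional neighbourhood of $p$; splitting the $s$'s yields $(0,1,3s)$ with normal $3(s-t)<0$ in the same half-space; splitting a $+1$-group cannot give a pure second-order escape because the one-sided constraint $\delta_i\geq 0$ prevents first-order cancellation. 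The conclusion is that the boundary configurations with $b=0$ are precisely $\ell=1$, $k\in\{1,\dots,n-1\}$, matching $\C^+_{k,n-k-1}$; a symmetric analysis with $a=0$ yields $\C^-_{\ell,n-\ell-1}$. This proves (a); (e) follows because within a given shell piece, the position of $p$ determines the interior values $(s,t)$ and the discrete piece index fixes the multiplicities $(a,b,k,\ell)$.

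\textbf{Parts (b) and (c).} I would parametrize each $\C^\pm_{k,n-k-1}$ by $(s,t)\in[-1,1]^2$ with $s\leq t$ and project via $\pi$. The Jacobian $\partial(x,y)/\partial(s,t)$ is proportional to $t-s$, so $\pi$ is a local diffeomorphism on each open piece. The inequality $ny\geq x^2$ in $\B^\flat_n$ is Cauchy-Schwarz on $\sum t_i,\sum t_i^2$, saturated on $\C^\pm_{n-1,0}$, and each parabolic bound $y\leq n-1+(x+2i-(n-1))^2$ corresponds to the $t=1$ edges of $\C^+_{k,n-k-1}$-pieces and the $s=-1$ edges of $\C^-_{\ell,n-\ell-1}$-pieces, i.e.\ tuples with all but one entry in $\{\pm 1\}$. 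A direct case analysis then establishes $\pi(\B^+_n)=\pi(\B^-_n)=\B^\flat_n$, giving (b). For (c), local injectivity together with compatible gluing of the shell pieces along their edges yields a global bijection $\pi\colon\B^\pm_n\to\B^\flat_n$; the inequality $z^+\geq z^-$ and equality on $\partial\B^\flat_n$ is read off by evaluating the two shell parametrizations at the parameter-boundary, where both shells meet along the rims traced by tuples with all but one entry in $\{\pm 1\}$.

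\textbf{Part (d) and main obstacle.} For $(x,y)\in\B^\flat_n$, I would set $S=\pi^{-1}(x,y)\cap\A_{3,n}$; it is closed and bounded in the vertical line $\ell$, contains $z^\pm$, and by (a) and (c) satisfies $\partial\A_{3,n}\cap\ell\subseteq\{z^-,z^+\}$. Then $\max S,\min S\in\partial\A_{3,n}$, forcing $S\subseteq[z^-,z^+]$; since the tangent cone at $(x,y,z^+)\in\B^+_n$ is the downward half-space, a one-sided interval $(z^+-\epsilon,z^+)$ lies in $\mathrm{int}(\A_{3,n})$ for small $\epsilon>0$, and by connectedness of the open interval $(z^-,z^+)$ (which does not meet $\partial\A_{3,n}$) the whole interval lies in $\mathrm{int}(\A_{3,n})$, so $S=[z^-,z^+]$. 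The main technical obstacle will be the second-order tangent-cone analysis underpinning (a) and (e): carefully establishing that splitting the larger interior value escapes the first-order half-space while splitting the smaller does not, and handling the degenerate cases where $s=-1$, $t=1$, or $s=t$ — where $\Pi$ or the entire analysis collapses and additional first-order contributions from the ambient endpoint values must be reassessed.
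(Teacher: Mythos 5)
Your strategy shares the paper's first step (Proposition~\ref{prop_2par}) but replaces the paper's machinery for proving $\partial\A_{3,n}\subseteq\B_n^+\cup\B_n^-$ — namely the reduction Lemma~\ref{lm:elim} to $n=4$ followed by the explicit perturbation constructions in Lemmas~\ref{lm_1110down}--\ref{lm_2100down} — with a first- and second-order tangent-cone analysis that is uniform in $n$. Your computations are correct: the normal $n_\Pi=(6st,-3(s+t),2)$, the signs $-6(s-1)(t-1)<0$ and $6(s+1)(t+1)>0$ of the one-sided rays, and the normal component $3(t-s)$ of the splitting direction $(0,1,3t)$. These track the paper's constructions precisely; for instance your ``split the $t$'s'' direction is exactly what Lemma~\ref{lm_1200up} realizes concretely with $(t_1,t_2,t_3)=(s+2\lambda,\,t-\lambda+\mu,\,t-\lambda-\mu)$, $\mu=\sqrt{2(t-s)\lambda-3\lambda^2}$. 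Avoiding the reduction to $n=4$ is a genuine simplification of the case bookkeeping.

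There is, however, a real gap at the step you yourself flag as the ``main technical obstacle,'' and it is the crux of the theorem rather than a detail. The assertion that a first-order half-space plus a one-sided pure second-order escape direction ``covers a $3$-dimensional neighbourhood of $p$'' does not follow from the stated data: a half-space together with a single curve leaving it can fail to fill any open set near $p$ (think of a cusp protruding from a flat wall). One must show that the second-order splitting can be \emph{combined} with the free first-order parameters $s,t$ so that the image contains an open ball, and this is exactly what the paper's lemmas do: they choose the correction term so that the $(x,y)$-coordinates are preserved exactly while $z$ moves by an explicitly bounded-below $\varepsilon'(\lambda)>0$, after which Propositions~\ref{prop:C+C+1+-1}--\ref{prop:2C+C-1} invoke the $2$-dimensional inverse function theorem in $(x,y)$ to sweep out a box. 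In addition, two further items are asserted rather than proved in your sketch: the reverse inclusion $\B_n^+\cup\B_n^-\subseteq\partial\A_{3,n}$, which (a) also requires (the paper derives it from the projection bijections of Propositions~\ref{prop:prop+} and~\ref{prop:prop-} together with compactness of $\A_{3,n}$), and the strict inequality $z^+>z^-$ over the interior of $\B_n^\flat$ in (c), which the paper again obtains from Lemmas~\ref{lm_1110down} and~\ref{lm_2100down} by exhibiting a point of $\A_{3,n}$ strictly below $z^+$. Finally, the degenerate cases $s=-1$, $t=1$, $s=t$ that collapse $\Pi$ are acknowledged but not treated; these too fall to the explicit constructions (compare the reparametrizations $(v,1)$ and $(-1,u)$ used in Propositions~\ref{prop:C+2C+1} and~\ref{prop:2C+C-1}).
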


In Section~\ref{sect:kC+ellC}, we find semi-algebraic descriptions of (in particular) $\C^+_{k,a}$ and $\C^{-}_{\ell,b}$. To write these descriptions down, we define 
\begin{align*}
A_{k\ell} &= k\ell(k+\ell)^2,\\
B_{k\ell}(x,y) &= 2k\ell x(2x^2 - 3(k+\ell)y),\\
C_{k\ell}(x,y) &= x^6 - 3(k+\ell)x^4y + 3(k^2+k\ell+\ell^2)x^2y^2 - (k-\ell)^2(k+\ell)y^3,\\
D_{k\ell}(x,y) &= (k+\ell)y-x^2,\\
f_{k\ell}(x,y,z) &= A_{k\ell}z^2+B_{k\ell}(x,y)z+C_{k\ell}(x,y)
\end{align*}
for all positive integers $k,\ell\geq1$. Note here that 
\[
B_{k\ell}^2-4A_{k\ell}C_{k\ell}=4k\ell(\ell-k)^2D_{k\ell}^3
\]
for all $k,\ell$. We then use these descriptions together with the previous theorem to prove our main result. Take the following sets:
\begin{align*}
X&=\bigcup_{k=1}^{n-1}\left\{\begin{pmatrix}x\\y\\z\end{pmatrix}+(n-k-1)\begin{pmatrix}1\\1\\1\end{pmatrix}\in\RR^3\,\middle|\,\begin{array}{l}y\leq k+(x+k)^2,\\y\geq (k+1)^{-1}x^2,\\y\leq 1+k^{-1}(x-1)^2 \mbox{ and}\\z\leq \frac{-B_{k1}(x,y)}{2A_{k1}}\mbox{ or }f_{k1}(x,y,z)\leq0\end{array}\right\},\\
Y&=\bigcup_{\ell=1}^{n-1}\left\{\begin{pmatrix}x\\y\\z\end{pmatrix}+(n-\ell-1)\begin{pmatrix}-1\\1\\-1\end{pmatrix}\in\RR^3\,\middle|\,\begin{array}{l}y\leq \ell+(x-\ell)^2,\\y\geq (\ell+1)^{-1}x^2,\\y\leq 1+ \ell^{-1}(x+1)^2\mbox{ and}\\z\geq \frac{-B_{1\ell}(x,y)}{2A_{1\ell}}\mbox{ or }f_{1\ell}(x,y,z)\leq0\end{array}\right\}.
\end{align*}

\begin{main}
We have $\A_{3,n}=X\cap Y$.
\end{main}

\subsection*{Structure of the paper}
The first step of the proof is to show that the boundary of $\A_{3,n}$ is contained in the union of $\B^{+}_{n}$ and $\B^{-}_{n}$. We start doing this by proving a result about representations of points on the boundary of $\A_{3,n}$ in Section~\ref{sect:2rep}. In Section~\ref{sect:4vsn}, we prove the statement for $n=4$ and then conclude that it holds for all~$n\geq3$. After that, in Section~\ref{sect:kC+ellC}, we find semi-algebraic descriptions for the components that make up the boundary. And then, in Section~\ref{sect:B+-}, we study the sets $\B^{+}_{n}$ and $\B^{-}_{n}$ in more detail and prove Theorem~\ref{thm:boundaryA3n} and the Main Theorem. We conclude the paper by discussing the problem for higher dimensions in Section~\ref{sect:future}.

\subsection*{Acknowledgments}
The problem that this paper solves was brought to our attention by Bernd Sturmfels during the graduate student meeting on applied algebra and combinatorics held in Leipzig on 18--20 February 2019. We would like to thank him for doing so and we would like to thank the organizers of this meeting for making it possible. We would also like to thank Peter Sarnak for explaining the origin and relevance of the problem to us. Lastly, we would like to thank the anonymous referees for their precise reading of our paper and their helpful comments.

\section{Representations of points on the boundary of \texorpdfstring{$\A_{3,n}$}{A3n}}\label{sect:2rep}

The goal of this and the next section is to prove that the boundary of $\A_{3,n}$ is contained in the union of $\B^{+}_{n}$ and $\B^{-}_{n}$. We start with the following proposition.

\begin{prop}\label{prop_2par}
Let $p\in\RR^3$ be a point on the boundary of $\A_{3,n}$ and write
\[
p=\begin{pmatrix}t_1\\t_1^2\\t_1^3\end{pmatrix}+\dots+\begin{pmatrix}t_n\\t_n^2\\t_n^3\end{pmatrix}
\]
for some tuple $(t_1,\dots,t_n)\in[-1,1]^n$. Then the set $\{t_1,\dots,t_n\}\setminus\{-1,1\}$ has at most two elements.
\end{prop}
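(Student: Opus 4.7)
My approach is to argue by contradiction using the inverse function theorem. Suppose that $\{t_1,\dots,t_n\}\setminus\{-1,1\}$ contains at least three distinct values $a,b,c$; after relabeling the tuple I may assume $t_1=a$, $t_2=b$, $t_3=c$, all lying strictly between $-1$ and $1$. Setting $q=\sum_{i=4}^{n}(t_i,t_i^2,t_i^3)^T$ and viewing this vector as fixed, I would consider the smooth map
\[
F\colon\RR^3\to\RR^3,\qquad F(s_1,s_2,s_3)=\sum_{j=1}^{3}\begin{pmatrix}s_j\\s_j^2\\s_j^3\end{pmatrix},
\]
so that $p=F(a,b,c)+q$, and aim to show that a whole neighborhood of $p$ lies in $\A_{3,n}$.

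The key computation is that the Jacobian of $F$ at $(a,b,c)$ has, up to a factor of $6$, the Vandermonde determinant $(b-a)(c-a)(c-b)$, which is nonzero by distinctness of $a,b,c$. The inverse function theorem then supplies an open neighborhood $U$ of $(a,b,c)$ that $F$ maps diffeomorphically onto an open neighborhood of $F(a,b,c)$ in $\RR^3$. Because $a,b,c$ lie strictly inside $(-1,1)$, I may shrink $U$ to ensure $U\subseteq(-1,1)^3$, so every $(s_1,s_2,s_3)\in U$ is a legal choice of parameters.

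Translating by the fixed vector $q$, the set $F(U)+q$ is an open neighborhood of $p$ consisting of points of the form $\sum_{i=1}^{n}(s_i,s_i^2,s_i^3)^T$ with all $s_i\in[-1,1]$, hence of points in $\A_{3,n}$. This contradicts $p\in\partial\A_{3,n}$, establishing the proposition. The argument is essentially routine once the right setup is chosen, and I do not anticipate a serious obstacle; the only subtle point is the need to perturb \emph{strictly inside} the parameter cube, which is precisely what the exclusion of $\pm 1$ in the statement guarantees, and which is what forces the restriction to three distinct values rather than three indices (any additional indices where $t_i\in\{-1,1\}$ are simply absorbed into $q$).
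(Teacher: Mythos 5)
Your proof is correct and follows essentially the same argument as the paper: isolating three coordinates with distinct values in $(-1,1)$, computing the Jacobian via the Vandermonde determinant, and invoking the inverse function theorem to contradict $p$ being a boundary point. No meaningful difference in approach.
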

\begin{proof}
Fix indices $1\leq i<j<k\leq n$ and consider the map
\begin{align*}
\varphi\colon[-1,1]^3&\to\RR^3,\\
(r,s,t)&\mapsto\begin{pmatrix}r\\r^2\\r^3\end{pmatrix}+\begin{pmatrix}s\\s^2\\s^3\end{pmatrix}+\begin{pmatrix}t\\t^2\\t^3\end{pmatrix}+\sum_{\substack{\ell\in\{1,\dots,n\}\\\ell\not\in\{i,j,k\}}}\begin{pmatrix}t_\ell\\t_\ell^2\\t_\ell^3\end{pmatrix}.
\end{align*}
Then we have $p=\varphi(t_i,t_j,t_k)$. The Jacobian of $\varphi$ at the point $(t_i,t_j,t_k)$ is
\[
\begin{pmatrix}1&1&1\\2t_i&2t_j&2t_k\\3t_i^2&3t_j^2&3t_k^2\end{pmatrix}=\begin{pmatrix}1\\&2\\&&3\end{pmatrix} \begin{pmatrix}1&1&1\\t_i&t_j&t_k\\t_i^2&t_j^2&t_k^2\end{pmatrix}
\]
and hence has rank $3$ if $t_i\neq t_j\neq t_k\neq t_i$. So, if in addition $t_i,t_j,t_k\not\in\{-1,1\}$, then every point in a small neighborhood around $p$ is in the image of $\varphi$ by the inverse function theorem. As this cannot happen for a point $p$ on the boundary of $\A_{3,n}$, it follows that the set $\{t_1,\dots,t_n\}\setminus\{-1,1\}$ has at most two elements.
\end{proof}

From the proposition, it immediately follows that the boundary of $\A_{3,n}$ is contained in the union of the sets
\[
\left\{k\begin{pmatrix}s\\s^2\\s^3\end{pmatrix}+\ell\begin{pmatrix}t\\t^2\\t^3\end{pmatrix}\,\middle|\,-1\leq s\leq t\leq1\right\}+a\begin{pmatrix}1\\1\\1\end{pmatrix}+b\begin{pmatrix}-1\\1\\-1\end{pmatrix}
\]
over all integers $k,\ell\geq1$ and $a,b\geq0$ such that $k+\ell+a+b=n$. So to prove that the boundary of $\A_{3,n}$ is contained in the union of $\B^{+}_{n}$ and $\B^{-}_{n}$, it suffices to prove that every point that is contained in one of these sets and is not contained in $\B^{+}_{n}\cup\B^{-}_{n}$ is also not contained in the boundary of $\A_{3,n}$.

\begin{lm}\label{lm:elim}
Let $k\geq k'\geq1$, $\ell\geq\ell'\geq1$, $a\geq a'\geq0$ and $b\geq b'\geq0$ be integers. Take $n=k+\ell+a+b$, $n'=k'+\ell'+a'+b'$ and $-1<s<t<1$. Assume that 
\[
p'=k'\begin{pmatrix}s\\s^2\\s^3\end{pmatrix}+\ell'\begin{pmatrix}t\\t^2\\t^3\end{pmatrix}+a'\begin{pmatrix}1\\1\\1\end{pmatrix}+b'\begin{pmatrix}-1\\1\\-1\end{pmatrix}
\]
is not contained in the boundary of $\A_{3,n'}$. Then
\[
p=k\begin{pmatrix}s\\s^2\\s^3\end{pmatrix}+\ell\begin{pmatrix}t\\t^2\\t^3\end{pmatrix}+a\begin{pmatrix}1\\1\\1\end{pmatrix}+b\begin{pmatrix}-1\\1\\-1\end{pmatrix}
\]
is not contained in the boundary of $\A_{3,n}$.
\end{lm}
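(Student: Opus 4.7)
The plan is to write $p$ as the Minkowski sum of $p'$ with a second point that is automatically in $\A_{3,n-n'}$, and then translate a small neighborhood around $p'$.

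More precisely, set $m=n-n'=(k-k')+(\ell-\ell')+(a-a')+(b-b')\geq 0$. Then I observe that
\[
p=p'+q,\qquad q=(k-k')\begin{pmatrix}s\\s^2\\s^3\end{pmatrix}+(\ell-\ell')\begin{pmatrix}t\\t^2\\t^3\end{pmatrix}+(a-a')\begin{pmatrix}1\\1\\1\end{pmatrix}+(b-b')\begin{pmatrix}-1\\1\\-1\end{pmatrix},
\]
where each summand belongs to $\C_3$ because $s,t,\pm1\in[-1,1]$. Hence $q$ is the sum of exactly $m$ elements of $\C_3$, so $q\in\A_{3,m}$. By associativity of the Minkowski sum, $\A_{3,n}=\A_{3,n'}+\A_{3,m}$.

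Since $p'\in\A_{3,n'}$ is by hypothesis not a boundary point of $\A_{3,n'}$, it lies in the interior of $\A_{3,n'}$, so there is an open neighborhood $U\subseteq\A_{3,n'}$ of $p'$. Translating by $q$ yields an open neighborhood $U+q$ of $p$ satisfying $U+q\subseteq\A_{3,n'}+\A_{3,m}=\A_{3,n}$. Therefore $p$ lies in the interior of $\A_{3,n}$, and in particular not on its boundary.

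There is essentially no obstacle: the degenerate case $m=0$ reduces to $p=p'$ and $n=n'$, which is trivial, and for $m\geq 1$ the argument is just the straightforward fact that $\A_{3,n}$ absorbs translations by elements of $\A_{3,m}$. The only thing to keep in mind is the subtle point that "not on the boundary" for a point of $\A_{3,n'}$ means lying in the topological interior, which is what makes the open-neighborhood argument work.
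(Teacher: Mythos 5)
Your proof is correct and follows essentially the same route as the paper's: writing $p = p' + q$ with $q$ a sum of $n-n'$ points of $\C_3$, and transporting the interior-point property of $p'$ in $\A_{3,n'}$ to $p$ in the translate $\A_{3,n'}+q\subseteq\A_{3,n}$. You merely spell out explicitly (open neighborhood, $\A_{3,n}=\A_{3,n'}+\A_{3,m}$) what the paper leaves implicit.
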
 
\begin{proof}
If the point $p'$ does not lie on the boundary of $\A_{3,n'}$, then the point $p$ cannot lie on the boundary of
\[
\A_{3,n'}+(k-k')(s,s^2,s^3)+(\ell-\ell')(t,t^2,t^3)+(a-a')(1,1,1)+(b-b')(-1,1,-1)
\]
and hence it can also not lie on the boundary of $\A_{3,n}$. 
\end{proof}

To prove that the boundary of $\A_{3,n}$ is contained in the union of $\B^{+}_{n}$ and $\B^{-}_{n}$, we need to eliminate the cases where one of the following conditions holds:
\begin{enumerate}
\item $k,\ell\geq2$,
\item $k=\ell=1$ and $a,b>0$,
\item $k=1$, $\ell>1$ and $a>0$,
\item $k>1$, $\ell=1$ and $b>0$.
\end{enumerate}
Using the previous lemma, we see that it suffices to eliminate the cases where 
\[
(k,\ell,a,b)\in\{(1,1,1,1),(1,2,1,0),(2,1,0,1),(2,2,0,0)\}
\]
and hence we first consider the boundary of $\A_{3,n}$ for $n=4$.

\section{The boundary of \texorpdfstring{$\A_{3,4}$}{A34} versus the boundary of \texorpdfstring{$\A_{3,n}$}{A3n}}\label{sect:4vsn}

In this section, we show that the boundary of $\A_{3,n}$ is contained in $\B^{+}_{n}\cup\B^{-}_{n}$ for $n=4$ and conclude from this that the same statement holds for all $n\geq3$. We need to show that no point of
\[
\left\{k\begin{pmatrix}s\\s^2\\s^3\end{pmatrix}+\ell\begin{pmatrix}t\\t^2\\t^3\end{pmatrix}\,\middle|\,-1< s< t<1\right\}+a\begin{pmatrix}1\\1\\1\end{pmatrix}+b\begin{pmatrix}-1\\1\\-1\end{pmatrix}
\]
is contained in the boundary of $\A_{3,4}$ for $(k,\ell,a,b)=(1,1,1,1),(1,2,1,0)$, $(2,1,0,1)$, $(2,2,0,0)$.
We start with the case $(k,\ell,a,b)=(2,2,0,0)$.

\begin{prop}\label{prop:2C+2C}
Take $-1<s<t<1$. Then the point
\[
p=2(s,s^2,s^3)+2(t,t^2,t^3)
\] 
does not lie on the boundary of $\A_{3,4}$.
\end{prop}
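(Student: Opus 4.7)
The plan is to exhibit another decomposition $p = \sum_{i=1}^{4} (u_i, u_i^2, u_i^3)$ in which the four parameters $u_1, u_2, u_3, u_4$ are pairwise distinct and all strictly inside $(-1,1)$. Once this is done, the set $\{u_1,\dots,u_4\}\setminus\{-1,1\}$ has four elements, so Proposition~\ref{prop_2par} forbids $p$ from lying on the boundary of $\A_{3,4}$.

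To construct such a tuple, I would invoke Newton's identities: requiring
$$u_1^j + u_2^j + u_3^j + u_4^j = 2s^j + 2t^j \quad \text{for } j=1,2,3$$
pins down the first three elementary symmetric polynomials $e_1, e_2, e_3$ of $u_1,\dots,u_4$ uniquely, while leaving $e_4 = u_1 u_2 u_3 u_4$ as the sole free parameter. Consequently, the monic polynomial with roots $u_1, \dots, u_4$ must have the form
$$(x-s)^2(x-t)^2 - \varepsilon$$
for some $\varepsilon \in \RR$, with $\varepsilon = 0$ recovering the original double-root representation $(s, s, t, t)$.

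The one analytic step, which I expect to be the main (though still mild) obstacle, is to verify that for sufficiently small $\varepsilon > 0$ this perturbed quartic genuinely has four distinct real roots inside $(-1,1)$. Here I would note that $(x-s)^2(x-t)^2$ is non-negative, vanishes to order two at $s$ and $t$, and attains a strictly positive local maximum equal to $((t-s)/2)^4$ at the midpoint $(s+t)/2$. Solving $(x-s)(x-t) = \pm\sqrt{\varepsilon}$ splits the problem into two quadratics whose discriminants are $(s-t)^2 + 4\sqrt{\varepsilon}$ and $(s-t)^2 - 4\sqrt{\varepsilon}$, both strictly positive whenever $\varepsilon \in \bigl(0, ((t-s)/2)^4\bigr)$. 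This produces four distinct real roots clustered in pairs around $s$ and $t$; since $s, t \in (-1,1)$, shrinking $\varepsilon$ further keeps all four roots inside $(-1,1)$ by continuity. Feeding this tuple into Proposition~\ref{prop_2par} finishes the argument.
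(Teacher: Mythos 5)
Your proof is correct and follows essentially the same approach as the paper: both exhibit a nearby decomposition with four pairwise distinct parameters in $(-1,1)$ and invoke Proposition~\ref{prop_2par}. In fact the perturbation families coincide---the paper's ansatz $\bigl(\tfrac{s+t}{2}\pm\alpha,\tfrac{s+t}{2}\pm\beta\bigr)$ with $\alpha^2+\beta^2=\tfrac{1}{2}(s-t)^2$ sweeps out exactly the quartics $(x-s)^2(x-t)^2-\varepsilon$---but your Newton's-identities framing makes it immediately transparent why there is precisely one free parameter, which is a cleaner way to see that the paper's ansatz is general enough.
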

\begin{proof}
Consider the system of equations
\begin{align*}
2s+2t&=t_1+t_2+t_3+t_4,\\
2s^2+2t^2&=t_1^2+t_2^2+t_3^2+t_4^2,\\
2s^3+2t^3&=t_1^3+t_2^3+t_3^3+t_4^3
\end{align*}
with the additional conditions that $-1<t_1,t_2,t_3,t_4<1$ are pairwise distinct. If this system has a solution that satisfies the additional conditions, then the point~$p$ cannot lie on the boundary of $\A_{3,4}$ by Proposition~\ref{prop_2par}. It turns out that such a solution $(t_1,t_2,t_3,t_4)$ can even be found when we assume that $t_1+t_2=t_3+t_4$. Indeed, let $0\neq\alpha\neq\beta\neq0$ be such that $|\alpha|,|\beta|<1-\frac{1}{2}|s+t|$ and $\alpha^2+\beta^2=\frac{1}{2}(s-t)^2$. Then 
\[
(t_1,t_2,t_3,t_4)=\left(\frac{s+t}{2}+\alpha,\frac{s+t}{2}-\alpha,\frac{s+t}{2}+\beta,\frac{s+t}{2}-\beta\right)
\]
is a solution to the system equalities so that $-1<t_1,t_2,t_3,t_4<1$ are pairwise distinct. One can check that $|\alpha|,|\beta|<1-\frac{1}{2}|s+t|$ and $\alpha^2+\beta^2=\frac{1}{2}(s-t)^2$ for $\alpha,\beta=\pm\frac{1}{2}(s-t)$. Here we use that $|s-t|+|s+t|\leq2\cdot\max(|s|,|t|)<2$. It follows that for any point $(\alpha,\beta)$ on the circle given by
\[
\alpha^2+\beta^2=\frac{1}{2}(s-t)^2
\]
that is sufficiently close to $(\frac{1}{2}(s-t),\frac{1}{2}(s-t))$ also satisfies these conditions. So to conclude the proof, we simply let $(\alpha,\beta)$ be such a point with $0\neq\alpha\neq\beta\neq0$.
\end{proof}

Next, we take care of the case $(k,\ell,a,b)=(1,1,1,1)$.

\begin{lm}\label{lm_1110down}
Take $\delta>0$. Then there exists an $\varepsilon>0$ such that 
\[
(s,s^2,s^3)+(t,t^2,t^3)+(1,1,1)-(0,0,\varepsilon')\in\A_{3,3}
\]
for all $-1<s<t<1$ and $0\leq\varepsilon'\leq\varepsilon$ with $1-t,t-s,s-(-1)\geq\delta$.
\end{lm}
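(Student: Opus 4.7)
The plan is to perturb the representation $(s, t, 1)$ of $p = (s, s^2, s^3) + (t, t^2, t^3) + (1, 1, 1)$ by decreasing the last entry slightly below $1$ while readjusting the first two entries so that the first two power sums of the triple remain fixed. Since $1$ is the maximum of $[-1, 1]$, moving the last entry downwards keeps the new triple in $[-1, 1]^3$; and a direct computation below shows that this perturbation forces the third power sum to decrease, which is exactly the shift $-\varepsilon'$ in the $z$-coordinate needed to realize $p - (0, 0, \varepsilon') \in \A_{3,3}$.

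Concretely, setting $\Phi(x, y, z) = (x + y + z, x^2 + y^2 + z^2, x^3 + y^3 + z^3)$, the Jacobian of the partial map $(x, y) \mapsto (\Phi_1(x, y, z), \Phi_2(x, y, z))$ at $(s, t)$ has determinant $2(t - s) \geq 2\delta$. By the implicit function theorem, for $z$ near $1$ there is a unique smooth pair $(x(z), y(z))$ with $(x(1), y(1)) = (s, t)$ along which $\Phi_1$ and $\Phi_2$ stay constant. Differentiating the defining equations yields $x'(1) = (1 - t)/(t - s)$ and $y'(1) = -(1 - s)/(t - s)$, and substituting into the chain rule for $\Phi_3$ gives
\[
\frac{d}{dz}\Phi_3(x(z), y(z), z)\bigg|_{z = 1} = 3(1 - s)(1 - t) \geq 6\delta^2 > 0,
\]
where I use $1 - s = (1 - t) + (t - s) \geq 2\delta$. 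So $\Phi_3$ strictly decreases as $z$ decreases from $1$.

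To conclude, I would make this uniform in $(s, t)$. The set $K_\delta = \{(s, t) \in [-1, 1]^2 : s + 1, t - s, 1 - t \geq \delta\}$ is compact, and all quantitative constants in the IFT (the Jacobian, its inverse, and the second derivatives of $\Phi$) are bounded on $K_\delta$. This produces a single $z_0 > 0$ depending only on $\delta$ such that, for every $(s, t) \in K_\delta$ and every $z \in [1 - z_0, 1]$, the triple $(x(z), y(z), z)$ lies in $[-1, 1]^3$ and $\Phi_3(s, t, 1) - \Phi_3(x(z), y(z), z) \geq 3\delta^2(1 - z)$ (halving the linear term to absorb the $O((1-z)^2)$ remainder). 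Setting $\varepsilon = 3\delta^2 z_0$, the intermediate value theorem then gives, for each $\varepsilon' \in [0, \varepsilon]$, a $z \in [1 - z_0, 1]$ with decrease exactly $\varepsilon'$; the resulting triple $(x(z), y(z), z) \in [-1, 1]^3$ is the required representation of $p - (0, 0, \varepsilon')$ in $\A_{3,3}$. The main technical obstacle is precisely this uniformity in $(s, t)$, which is handled by compactness together with standard quantitative versions of the implicit function theorem.
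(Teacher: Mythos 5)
Your argument is correct but takes a genuinely different route from the paper. The paper writes down an explicit family
\[
(t_1,t_2,t_3)=(s+\mu,\ t+\lambda-\mu,\ 1-\lambda),\qquad \mu=\tfrac12\bigl(\lambda+(t-s)-\sqrt{d}\bigr),
\]
with $d=(t-s)^2+2(t-s)\lambda+4(1-t)\lambda-3\lambda^2$, verifies directly that the first two power sums are preserved while the third drops by $\varepsilon'(\lambda)=3\lambda(1-t-\lambda)(1-s-\lambda)$, and then picks the explicit constant $c=\min(\delta,\delta^2)/1000$ to make all bounds hold uniformly. You instead set up the same one-parameter deformation implicitly: fix the first two power sums, let the third variable $z$ drop from $1$, and invoke the implicit function theorem (valid since the relevant Jacobian determinant is $2(t-s)\geq 2\delta\neq 0$) to define $(x(z),y(z))$. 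Your derivative computation $\frac{d}{dz}\Phi_3\big|_{z=1}=3(1-s)(1-t)\geq 6\delta^2$ matches the paper's $\varepsilon'(\lambda)$ to first order, and the compactness of $K_\delta$ plus uniform bounds on derivatives is a legitimate replacement for the paper's explicit constant. The trade-off: the paper's explicit formulas are fully self-contained and immediately checkable, at the cost of being unmotivated; your version explains \emph{why} the construction works (and why the ordering $1-t,\,t-s,\,s+1\geq\delta$ produces a strictly positive rate of decrease) but delegates the uniformity to a quantitative IFT that is standard but not written out. Both are acceptable; if you want to tighten yours, the one thing worth spelling out is why $(x(z),y(z))$ stays at distance less than $\delta$ from $(s,t)$ on the interval $[1-z_0,1]$, which follows from the uniform bound $|x'|,|y'|\leq 2/\delta$ on $K_\delta$.
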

\begin{proof}
For $0\leq\lambda\ll1$, consider $(t_1,t_2,t_3)=(s+\mu,t+\lambda-\mu,1-\lambda)$ where 
\begin{align*}
\mu&=(\lambda+(t-s)-\sqrt{d})/2,\\
d&=(t-s)^2+2(t-s)\lambda+4(1-t)\lambda-3\lambda^2.
\end{align*}
Assuming that $d\geq0$ and $-1\leq t_1,t_2,t_3\leq1$, one can check that
\[
(s,s^2,s^3)+(t,t^2,t^3)+(1,1,1)-(0,0,\varepsilon'(\lambda))=\sum_{i=1}^3(t_i,t_i^2,t_i^3)\in\A_{3,3}
\]
for $\varepsilon'(\lambda)=3\lambda(1-t-\lambda)(1-s-\lambda)$. Note that $\varepsilon'(0)=0$. So to prove the lemma, it suffices to find a $c>0$ such that $d\geq0$ and $-1\leq t_1,t_2,t_3\leq1$ for all $0\leq\lambda\leq c$ and $\varepsilon'(c)>0$, because we can then take $\varepsilon=\varepsilon'(c)$ and have $[0,\varepsilon]\subseteq\varepsilon'([0,c])$.\bigskip

Take $c=\min(\delta,\delta^2)/1000>0$ and assume that $0\leq\lambda\leq c$. Then we have
\[
d\geq (t-s)^2+3\lambda(2\delta-\lambda)\geq (t-s)^2\geq0
\]
and hence $\sqrt{d}\geq t-s$. We also have
\[
\sqrt{d}-(t-s)=\frac{d-(t-s)^2}{\sqrt{d}+(t-s)}\leq\frac{2(t-s)\lambda+4(1-t)\lambda-3\lambda^2}{2(t-s)}\leq\frac{12\lambda}{2\delta}\leq \delta/100
\]
since $t-s,1-t\leq 2$. Hence $|\mu|\leq \delta/100$ and therefore $-1\leq t_1,t_2,t_3\leq 1$. We have 
\[
\varepsilon'(c)=3c(1-t-c)(1-s-c)\geq 3c\cdot \delta/3\cdot \delta/3=c\delta^2/3>0.
\]
Hence the statement of the lemma holds.
\end{proof}

\begin{lm}\label{lm_1101up}
Take $\delta>0$. Then there exists an $\varepsilon>0$ such that 
\[
(s,s^2,s^3)+(t,t^2,t^3)+(-1,1,-1)+(0,0,\varepsilon')\in\A_{3,3}
\]
for all $-1<s<t<1$ and $0\leq\varepsilon'\leq\varepsilon$ with $1-t,t-s,s-(-1)\geq\delta$.
\end{lm}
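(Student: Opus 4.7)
The plan is to reduce this lemma to Lemma~\ref{lm_1110down} via a symmetry of $\A_{3,3}$. Consider the linear involution $\sigma\colon\RR^3\to\RR^3$ given by $\sigma(x,y,z)=(-x,y,-z)$. Since $\sigma(r,r^2,r^3)=((-r),(-r)^2,(-r)^3)$ for every $r\in[-1,1]$, the map $\sigma$ stabilises the twisted cubic segment $\C_3$, and being linear it therefore preserves every Minkowski power of it; in particular $\sigma(\A_{3,3})=\A_{3,3}$. Crucially, the two translation vectors that distinguish the present statement from Lemma~\ref{lm_1110down} are matched by $\sigma$ as well: $\sigma(1,1,1)=(-1,1,-1)$ and $\sigma(0,0,-\varepsilon')=(0,0,\varepsilon')$.

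With this in place, given $\delta>0$ I take $\varepsilon>0$ to be the constant supplied by Lemma~\ref{lm_1110down} for this same $\delta$. For arbitrary $-1<s<t<1$ with $1-t,\,t-s,\,s-(-1)\geq\delta$ and arbitrary $\varepsilon'\in[0,\varepsilon]$, I set $\tilde s=-t$ and $\tilde t=-s$. The new pair again satisfies $-1<\tilde s<\tilde t<1$, and the three gaps $1-\tilde t$, $\tilde t-\tilde s$ and $\tilde s-(-1)$ equal the original gaps $s-(-1)$, $t-s$ and $1-t$ respectively, hence are each at least $\delta$. Lemma~\ref{lm_1110down} applied to $(\tilde s,\tilde t)$ then gives
\[
(\tilde s,\tilde s^2,\tilde s^3)+(\tilde t,\tilde t^2,\tilde t^3)+(1,1,1)-(0,0,\varepsilon')\in\A_{3,3},
\]
and applying $\sigma$ to both sides, using $\sigma(\tilde s,\tilde s^2,\tilde s^3)=(t,t^2,t^3)$ and $\sigma(\tilde t,\tilde t^2,\tilde t^3)=(s,s^2,s^3)$, produces exactly the membership claimed in the lemma.

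Because the whole argument is a transport across the involution $\sigma$, I do not anticipate any genuine obstacle: the only things to verify are that $\sigma$ stabilises the twisted cubic segment (immediate from the parity of the exponents) and that the three distance conditions interchange correctly under the substitution $(\tilde s,\tilde t)=(-t,-s)$ (routine bookkeeping). The alternative of reproving the lemma from scratch with an ansatz such as $(t_1,t_2,t_3)=(-1+\lambda,\,s-\mu,\,t-\lambda+\mu)$ would duplicate the perturbation estimates already carried out for Lemma~\ref{lm_1110down}, which is why the symmetry route is the cleaner one.
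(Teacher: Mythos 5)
Your proof is correct, and it follows a genuinely different route from the paper. The paper proves Lemma~\ref{lm_1101up} directly, by exhibiting an explicit one-parameter family $(t_1,t_2,t_3)=(s-\lambda+\mu,\,t-\mu,\,-1+\lambda)$ for a suitable $\mu$ and carrying out perturbation estimates parallel to those already done for Lemma~\ref{lm_1110down}; the details are then left to the reader precisely because they mirror the earlier computation. You instead observe that the linear involution $\sigma(x,y,z)=(-x,y,-z)$ stabilises $\C_3$ and hence each $\A_{3,n}$, exchanges $(1,1,1)$ with $(-1,1,-1)$, and flips the sign of the vertical perturbation; the substitution $(\tilde s,\tilde t)=(-t,-s)$ then carries the hypotheses of Lemma~\ref{lm_1110down} into those of the present lemma, with the three gap conditions permuting correctly. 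This transport argument is shorter and avoids re-deriving any estimates, so it has the advantage of being self-evidently a corollary of the earlier lemma rather than a parallel computation. The paper's direct approach has the mild virtue of producing an explicit ansatz for $(t_1,t_2,t_3)$, which is of the same type as the constructions used in the other lemmas of Section~\ref{sect:4vsn}; but for the purposes of Proposition~\ref{prop:C+C+1+-1} either formulation suffices. One could also use your symmetry $\sigma$ to derive Lemma~\ref{lm_2100down} from Lemma~\ref{lm_1200up} in the same way, which the paper likewise handles by a separate explicit ansatz.
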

\begin{proof}
The proof is similar to the proof of Lemma~\ref{lm_1110down}. For $0\leq\lambda\ll1$, one considers $(t_1,t_2,t_3)=(s-\lambda+\mu,t-\mu,-1+\lambda)$ where
\begin{align*}
\mu&=(\lambda+(t-s)-\sqrt{d})/2,\\
d&=(t-s)^2+2(t-s)\lambda+4(s-(-1))\lambda-3\lambda^2.
\end{align*}
Assuming that $d\geq0$ and $-1\leq t_1,t_2,t_3\leq1$, one can check that
\[
(s,s^2,s^3)+(t,t^2,t^3)+(-1,1,-1)+(0,0,\varepsilon'(\lambda))=\sum_{i=1}^3(t_i,t_i^2,t_i^3)\in\A_{3,3}
\]
for $\varepsilon'(\lambda)=3\lambda(t-(-1)-\lambda)(s-(-1)-\lambda)$. The other details are left to the reader.
\end{proof}

\begin{prop}\label{prop:C+C+1+-1}
Take $-1<s<t<1$. Then the point
\[
p=(s,s^2,s^3)+(t,t^2,t^3)+(1,1,1)+(-1,1,-1)
\] 
does not lie on the boundary of $\A_{3,4}$.
\end{prop}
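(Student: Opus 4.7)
The plan is to show that $p$ lies in the interior of $\A_{3,4}$ by combining the two preceding lemmas with the natural two-parameter deformation of $(s,t)$. Together these furnish a three-parameter family of points of $\A_{3,4}$ whose image covers a full neighborhood of $p$, which forces $p$ off the boundary.

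Concretely, I would define
\[
\phi(s',t',\varepsilon') \;=\; \begin{pmatrix} s' \\ (s')^2 \\ (s')^3 \end{pmatrix} + \begin{pmatrix} t' \\ (t')^2 \\ (t')^3 \end{pmatrix} + \begin{pmatrix} 1 \\ 1 \\ 1 \end{pmatrix} + \begin{pmatrix} -1 \\ 1 \\ -1 \end{pmatrix} + \begin{pmatrix} 0 \\ 0 \\ \varepsilon' \end{pmatrix},
\]
so that $\phi(s,t,0) = p$. Choose $\delta>0$ with $\min(1-t,\,t-s,\,s+1)\geq 2\delta$ and restrict to the open set of triples $(s',t',\varepsilon')$ for which $(s',t')$ still satisfies the $\delta$-bound required by Lemmas~\ref{lm_1110down} and~\ref{lm_1101up}. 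Applying Lemma~\ref{lm_1110down} for $\varepsilon'\leq 0$ (and adding $(-1,1,-1)\in\C_3$ to both sides of the resulting identity in $\A_{3,3}$), together with Lemma~\ref{lm_1101up} for $\varepsilon'\geq 0$ (adding $(1,1,1)\in\C_3$), produces a single $\varepsilon>0$ such that $\phi(s',t',\varepsilon')\in\A_{3,4}$ for all $(s',t')$ near $(s,t)$ and every $|\varepsilon'|\leq\varepsilon$. A direct computation gives
\[
\det \mathrm{d}\phi_{(s,t,0)} \;=\; \det \begin{pmatrix} 1 & 1 & 0 \\ 2s & 2t & 0 \\ 3s^2 & 3t^2 & 1 \end{pmatrix} \;=\; 2(t-s) \;\neq\; 0,
\]
so by the inverse function theorem $\phi$ is a local diffeomorphism at $(s,t,0)$; in particular its image contains an open neighborhood of $p$ inside $\A_{3,4}$, proving the proposition.

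I do not anticipate a genuine obstacle here: the delicate construction of vertical deformations has already been absorbed into Lemmas~\ref{lm_1110down} and~\ref{lm_1101up}, and what remains is a transversality observation packaged with the inverse function theorem. The only subtlety worth emphasizing is that the $\varepsilon$ supplied by each lemma depends only on the single constant $\delta$, which is precisely what licenses applying them uniformly on an open neighborhood of $(s,t)$ rather than at a single point.
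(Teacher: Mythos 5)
Your proof is correct and follows essentially the same approach as the paper: both rely on Lemmas~\ref{lm_1110down} and~\ref{lm_1101up} to produce a two-sided vertical deformation that is uniform over a neighborhood of $(s,t)$, and both invoke the inverse function theorem to conclude that an open neighborhood of $p$ lies in $\A_{3,4}$. The only cosmetic difference is that you package the deformation as a single three-variable map $\phi$ and apply the $3\times3$ inverse function theorem, whereas the paper applies the $2\times2$ inverse function theorem to the $(x,y)$-projection and then handles the $z$-direction separately via the lemmas.
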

\begin{proof}
Set $\delta=\min(1-t,t-s,s-(-1))/2>0$ and let $\varepsilon>0$ be the minimum of the two $\varepsilon$'s from Lemmas~\ref{lm_1110down} and~\ref{lm_1101up}. Write $p=(x,y,z)$. The Jacobian of the map 
\[
(u,v)\mapsto(u,u^2)+(v,v^2)+(1,1)+(-1,1)
\]
is invertible at $(s,t)$. It follows that all points in $\RR^2$ in a small neighborhood of $(x,y)$ are of the form
\[
(u,u^2)+(v,v^2)+(1,1)+(-1,1)
\]
with $(u,v)$ in a small neighborhood of $(s,t)$ by the Inverse Function Theorem. By shrinking this neighborhood, we may assume that $-1<u<v<1$ and $1-v,v-u,u-(-1)\geq\delta$. Lemmas~\ref{lm_1110down} and~\ref{lm_1101up} now tell us that
\[
(u,u^2,u^3)+(v,v^2,v^3)+(1,1,1)+(-1,1,-1)+(0,0,\varepsilon')\in\A_{3,4}
\]
for all $(u,v)$ in the neighborhood and $|\varepsilon'|<\varepsilon$. Hence $p$ is in the interior of $\A_{3,4}$.
\end{proof}

Finally, we take care of the cases $(k,\ell,a,b)=(1,2,1,0),(2,1,0,1)$.

\begin{lm}\label{lm_1200up}
Take $\delta>0$. Then there exists an $\varepsilon>0$ such that 
\[
(s,s^2,s^3)+2(t,t^2,t^3)+(0,0,\varepsilon')\in\A_{3,3}
\]
for all $-1\leq s<t<1$ and $0\leq\varepsilon'\leq\varepsilon$ with $1-t,t-s\geq\delta$.
\end{lm}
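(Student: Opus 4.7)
The proof follows the pattern of Lemma~\ref{lm_1110down}: the strategy is to perturb the degenerate representation $(s,t,t)$ of the point $p := (s,s^2,s^3) + 2(t,t^2,t^3)$ on $\C_3$ into a triple $(t_1, t_2, t_3) \in [-1,1]^3$ with distinct entries whose first two moments still equal $s+2t$ and $s^2+2t^2$ but whose third moment has increased by a prescribed small $\varepsilon' \geq 0$, thereby certifying $p + (0,0,\varepsilon') \in \A_{3,3}$.

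For $0 \leq \lambda \ll 1$, I would take
\[
(t_1, t_2, t_3) \;=\; (s + \lambda,\; t - \mu,\; t + \mu - \lambda), \qquad \mu = \frac{\lambda + \sqrt{d}}{2}, \qquad d = \lambda(4(t-s) - 3\lambda).
\]
The first-moment condition $t_1 + t_2 + t_3 = s + 2t$ is built in, and this $\mu$ is exactly the nonnegative root of the quadratic $\mu^2 - \mu\lambda - \lambda(t-s-\lambda) = 0$ forced by the second-moment condition $t_1^2 + t_2^2 + t_3^2 = s^2 + 2t^2$. Substituting into the third-moment sum and using this quadratic to eliminate $\mu^2$, a direct expansion collapses to
\[
t_1^3 + t_2^3 + t_3^3 \;=\; s^3 + 2t^3 + \varepsilon'(\lambda), \qquad \varepsilon'(\lambda) := 3\lambda(t-s-\lambda)^2.
\]
Hence, provided $d \geq 0$ and $-1 \leq t_i \leq 1$ for each $i$, one has $p + (0,0,\varepsilon'(\lambda)) \in \A_{3,3}$.

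Since $\varepsilon'$ is continuous and $\varepsilon'(0)=0$, it remains only to produce some $c > 0$ such that $d \geq 0$ and $t_i \in [-1,1]$ hold throughout $\lambda \in [0,c]$ and $\varepsilon'(c) > 0$; setting $\varepsilon := \varepsilon'(c)$, the intermediate value theorem then gives $[0,\varepsilon] \subseteq \varepsilon'([0,c])$. Take $c = \delta^2/M$ for a sufficiently large absolute constant $M$. The hypothesis $t-s \geq \delta$ yields $d \geq \lambda(4\delta - 3c) \geq 0$ and $\sqrt{d} \leq 2\sqrt{\lambda(t-s)} \leq 2\sqrt{2\lambda}$, whence $\mu \leq \lambda/2 + \sqrt{2\lambda}$ and $|\mu - \lambda| \leq \sqrt{2\lambda}$. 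Combined with the inequalities $s \geq -1$, $t \leq 1 - \delta$, $s \leq 1 - 2\delta$ (from $s \leq t - \delta$), and $t \geq -1 + \delta$ (from $t \geq s + \delta$), these bounds force $t_i \in [-1,1]$ whenever $\lambda \leq c$, while $\varepsilon'(c) \geq 3c(\delta/2)^2 > 0$ is immediate. The only genuinely non-routine step is guessing the ansatz $(s+\lambda,\, t-\mu,\, t+\mu-\lambda)$; once that is in hand, every verification reduces to the same kind of bookkeeping as in Lemma~\ref{lm_1110down}.
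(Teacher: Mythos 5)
Your proof is correct and uses the same strategy as the paper—perturb the degenerate representation $(s,t,t)$ by a small parameter $\lambda$, solve for the correction $\mu$ that preserves the first two power sums, and observe that the third power sum then increases by $\varepsilon'(\lambda)=3\lambda(t-s-\lambda)^2$. The paper chooses the slightly different ansatz $(s+2\lambda,\,t-\lambda+\mu,\,t-\lambda-\mu)$ with $\mu=\sqrt{2(t-s)\lambda-3\lambda^2}$ and $\varepsilon'(\lambda)=6\lambda(t-s-2\lambda)^2$, but the two parametrizations are equivalent up to routine bookkeeping.
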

\begin{proof}
The proof is similar to the proof of Lemma~\ref{lm_1110down}. For $0\leq\lambda\ll1$, one considers
\[
(t_1,t_2,t_3)=(s+2\lambda,t-\lambda+\mu,t-\lambda-\mu)
\]
where $\mu=\sqrt{2(t-s)\lambda-3\lambda^2}$ and finds that
\[
(s,s^2,s^3)+2(t,t^2,t^3)+(0,0,\varepsilon'(\lambda))=\sum_{i=1}^3(t_i,t_i^2,t_i^3)\in\A_{3,3}.
\]
for $\varepsilon'(\lambda)=6(t-s)^2\lambda-24(t-s)\lambda^2+24\lambda^3$. The other details are left to the reader.
\end{proof}

\begin{lm}\label{lm_2100down}
Take $\delta>0$. Then there exists an $\varepsilon>0$ such that 
\[
2(s,s^2,s^3)+(t,t^2,t^3)-(0,0,\varepsilon')\in\A_{3,3}
\]
for all $-1<s<t\leq1$ and $0\leq\varepsilon'\leq\varepsilon$ with $t-s,s-(-1)\geq\delta$.
\end{lm}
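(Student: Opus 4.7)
The plan is to carry out the mirror-image of the construction used in Lemma~\ref{lm_1200up}: whereas there one split $t$ into two close points and translated $s$ outward (toward $-1$), here one splits $s$ into two close points and translates $t$ inward. Specifically, for $0\le\lambda\ll1$ I propose the triple
\[
(t_1,t_2,t_3)=(s+\lambda+\mu,\;s+\lambda-\mu,\;t-2\lambda),\qquad \mu=\sqrt{2(t-s)\lambda-3\lambda^2}.
\]

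First I would verify by direct expansion of $t_1^j+t_2^j+t_3^j$ for $j=1,2,3$ that, under the relation $\mu^2=2(t-s)\lambda-3\lambda^2$, one obtains
\[
2(s,s^2,s^3)+(t,t^2,t^3)-(0,0,\varepsilon'(\lambda))=\sum_{i=1}^{3}(t_i,t_i^2,t_i^3),
\]
where $\varepsilon'(\lambda)=6(t-s)^2\lambda-24(t-s)\lambda^2+24\lambda^3$. The algebraic identity is formally the same as in Lemma~\ref{lm_1200up}, reflecting the symmetry $(s,t)\mapsto(-t,-s)$ that interchanges the two lemmas. In particular $\varepsilon'(0)=0$ and $(\varepsilon')'(0)=6(t-s)^2\ge 6\delta^2>0$, so $\varepsilon'$ is strictly increasing near $\lambda=0$.

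Next, following the template of Lemma~\ref{lm_1110down}, I would fix $c=\min(\delta,\delta^2)/N$ for a sufficiently large absolute constant $N$ and check for all $0\le\lambda\le c$ and all admissible $s,t$ that $\mu^2\ge\lambda(2\delta-3\lambda)\ge0$ and $-1\le t_1,t_2,t_3\le1$. The membership bounds use $s\in[-1+\delta,\,1-\delta]$, $t\in[s+\delta,1]$, and that $|\mu|$ is of order $\sqrt{\lambda}$ and hence small; in particular $s+\lambda+\mu\le 1$ holds because $s\le 1-\delta$ (from $t\le1$ and $t-s\ge\delta$), while $t-2\lambda\ge -1$ follows from $t>-1+\delta$. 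Setting $\varepsilon=\varepsilon'(c)>0$, the intermediate value theorem then gives $[0,\varepsilon]\subseteq\varepsilon'([0,c])$, which finishes the proof.

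The main obstacle I anticipate is the asymmetric roles of $s$ and $t$ in the two boundary conditions $s\in[-1+\delta,1-\delta]$ and $t\in[s+\delta,1]$: here $t$ is allowed to equal $1$, whereas in Lemma~\ref{lm_1200up} it was $s$ that could equal $-1$. Consequently one must be careful to bound $s+\lambda+\mu\le1$ using $s\le1-\delta$ (not $t\le1-\delta$). Apart from this bookkeeping, every estimate is analogous to those of Lemma~\ref{lm_1110down}, so I would leave the details to the reader in line with the exposition of the preceding lemmas.
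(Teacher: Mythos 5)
Your proposal is correct and uses exactly the same construction as the paper: the triple $(t_1,t_2,t_3)=(s+\lambda+\mu,\,s+\lambda-\mu,\,t-2\lambda)$ with $\mu=\sqrt{2(t-s)\lambda-3\lambda^2}$ and the resulting $\varepsilon'(\lambda)=6(t-s)^2\lambda-24(t-s)\lambda^2+24\lambda^3$, followed by the same bounding argument as in Lemma~\ref{lm_1110down}. (One small slip in your narrative aside: in Lemma~\ref{lm_1200up} the endpoint $s$ is moved to $s+2\lambda$, which is toward the interior, not toward $-1$; this does not affect the proof.)
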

\begin{proof}
The proof is similar to the proof of Lemma~\ref{lm_1110down}. For $0\leq\lambda\ll1$, one considers
\[
(t_1,t_2,t_3)=(s+\lambda+\mu,s+\lambda-\mu,t-2\lambda)
\]
where $\mu=\sqrt{2(t-s)\lambda-3\lambda^2}$ and finds that
\[
2(s,s^2,s^3)+(t,t^2,t^3)-(0,0,\varepsilon'(\lambda))=\sum_{i=1}^3(t_i,t_i^2,t_i^3)\in\A_{3,3}.
\]
for $\varepsilon'(\lambda)=6(t-s)^2\lambda-24(t-s)\lambda^2+24\lambda^3$. The other details are left to the reader.
\end{proof}

\begin{prop}\label{prop:C+2C+1}
Take $-1<s<t<1$. Then the point
\[
p=(s,s^2,s^3)+2(t,t^2,t^3)+(1,1,1)
\] 
does not lie on the boundary of $\A_{3,4}$.
\end{prop}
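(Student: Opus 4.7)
The plan is to follow the template of Proposition~\ref{prop:C+C+1+-1}, showing that $p$ is in fact an interior point of $\A_{3,4}$. Concretely, I will exhibit a full three-dimensional open neighborhood of $p$ contained in $\A_{3,4}$: horizontal perturbations via the Inverse Function Theorem, and vertical perturbations (both upward and downward) via two of the auxiliary lemmas already established.

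First, observe that the Jacobian of the map $(u,v)\mapsto (u+2v,\,u^2+2v^2)$ at $(s,t)$ has determinant $4(t-s)\neq 0$. By the Inverse Function Theorem, every point $(x',y')$ in a sufficiently small neighborhood of the first two coordinates $(x,y)$ of $p$ can be written uniquely as $(u,u^2)+2(v,v^2)+(1,1)$ with $(u,v)$ close to $(s,t)$. Shrinking this neighborhood further, I may assume $-1<u<v<1$ together with $1-v,\,v-u,\,u+1\geq\delta$, where $\delta=\min(1-t,\,t-s,\,s+1)/2$.

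Now let $\varepsilon>0$ be the minimum of the two values supplied by Lemmas~\ref{lm_1200up} and~\ref{lm_1110down} for this $\delta$. The upward direction is handled directly by Lemma~\ref{lm_1200up}: one has $(u,u^2,u^3)+2(v,v^2,v^3)+(0,0,\varepsilon')\in\A_{3,3}$ for $0\leq\varepsilon'\leq\varepsilon$, and adding $(1,1,1)$ keeps the corresponding point in $\A_{3,4}$. For the downward direction, Lemma~\ref{lm_1110down} gives $(u,u^2,u^3)+(v,v^2,v^3)+(1,1,1)-(0,0,\varepsilon')\in\A_{3,3}$; adding the extra summand $(v,v^2,v^3)$, which lies in the twisted cubic segment, produces a point of $\A_{3,4}$ of the required form. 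Combined with the horizontal freedom from the previous step, this exhibits an open neighborhood of $p$ inside $\A_{3,4}$, so $p$ cannot lie on the boundary.

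There is no serious obstacle: the bulk of the work has already been packaged into Lemmas~\ref{lm_1200up} and~\ref{lm_1110down}, and the rest is the same routine Inverse Function Theorem bookkeeping used in Proposition~\ref{prop:C+C+1+-1}. The only minor subtlety is the correct splitting when applying Lemma~\ref{lm_1110down}: one treats the $2(v,v^2,v^3)$ appearing in $p$ as one copy consumed by the lemma plus a second copy added back after the fact, so that the total remains a sum of four twisted cubic vectors.
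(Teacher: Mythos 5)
Your proof is correct. The upward step is identical to the paper's: both apply Lemma~\ref{lm_1200up} to the pair $(u,v)$ and then shift by $(1,1,1)$. The downward step is where you diverge. The paper invokes Lemma~\ref{lm_2100down} with the pair $(v,1)$, obtaining
\[
2(v,v^2,v^3)+(1,1,1)-(0,0,\varepsilon')\in\A_{3,3},
\]
and then adds $(u,u^2,u^3)$ to land in $\A_{3,4}$. You instead invoke Lemma~\ref{lm_1110down} with the pair $(u,v)$, obtaining
\[
(u,u^2,u^3)+(v,v^2,v^3)+(1,1,1)-(0,0,\varepsilon')\in\A_{3,3},
\]
and then add a second copy of $(v,v^2,v^3)$. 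Both decompositions yield precisely $(u,u^2,u^3)+2(v,v^2,v^3)+(1,1,1)-(0,0,\varepsilon')\in\A_{3,4}$, and in both cases the hypotheses of the chosen lemma are satisfied for $(u,v)$ near $(s,t)$ with your choice of $\delta$, so neither route is gaining or losing generality; the computation of the Jacobian determinant $4(v-u)$ and the Inverse Function Theorem bookkeeping are the same. The paper's choice has a slight structural tidiness in that Propositions~\ref{prop:C+2C+1} and~\ref{prop:2C+C-1} then form a symmetric pair both built from Lemmas~\ref{lm_1200up} and~\ref{lm_2100down}, whereas your route recycles the $(1,1,1)$-shift lemma from Proposition~\ref{prop:C+C+1+-1}. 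Substantively, both arguments are sound and of equal length.
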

\begin{proof}
The proof is similar to the proof of Proposition~\ref{prop:C+C+1+-1}. For $(u,v)$ in a small neighborhood of $(s,t)$, we find points in $\A_{3,4}$ above
\[
(u,u^2,u^3)+2(v,v^2,v^3)+(1,1,1)
\]
using Lemma~\ref{lm_1200up} and we find points below using Lemma~\ref{lm_2100down}. Note here that in the latter case the role of the pair $(s,t)$ from Lemma~\ref{lm_2100down} is played by $(v,1)$.
\end{proof}

\begin{prop}\label{prop:2C+C-1}
Take $-1<s<t<1$. Then the point
\[
p=2(s,s^2,s^3)+(t,t^2,t^3)+(-1,1,-1)
\] 
does not lie on the boundary of $\A_{3,4}$.
\end{prop}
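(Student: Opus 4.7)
My plan is to mirror the proof of Proposition~\ref{prop:C+2C+1}, only reversing the roles of the shifts $(1,1,1)$ and $(-1,1,-1)$ and swapping the multiplicities attached to $s$ and $t$. Writing $p=(x,y,z)$ and setting $\delta=\min(1-t,t-s,s+1)/2>0$, I will first observe that the Jacobian of the projected map $(u,v)\mapsto 2(u,u^2)+(v,v^2)+(-1,1)$ at $(s,t)$ is nonsingular (its determinant is $4(t-s)\neq 0$). The Inverse Function Theorem then lets me parametrize every $(x',y')$ sufficiently close to $(x,y)$ as $2(u,u^2)+(v,v^2)+(-1,1)$ with $(u,v)$ near $(s,t)$, and after shrinking the neighborhood I may further assume $-1<u<v<1$ together with $1-v, v-u, u+1\ge\delta$.

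For each such $(u,v)$ I will sandwich the corresponding lifted point
\[
q:=2(u,u^2,u^3)+(v,v^2,v^3)+(-1,1,-1)
\]
between points of $\A_{3,4}$ of the form $q\pm(0,0,\varepsilon')$. For the upper sandwich I will apply Lemma~\ref{lm_1200up} with its pair $(s,t)$ now playing the role of $(-1,u)$: this is legal because $-1\le -1<u<1$ and both gaps $1-u$ and $u+1$ are at least $\delta$ by construction. The lemma supplies $\varepsilon_1>0$ with $(-1,1,-1)+2(u,u^2,u^3)+(0,0,\varepsilon')\in\A_{3,3}$ for all $0\le\varepsilon'\le\varepsilon_1$, and adding the fourth summand $(v,v^2,v^3)$ lifts this into $\A_{3,4}$ at $q+(0,0,\varepsilon')$. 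For the lower sandwich I will apply Lemma~\ref{lm_2100down} directly to $(u,v)$, obtaining $\varepsilon_2>0$ with $2(u,u^2,u^3)+(v,v^2,v^3)-(0,0,\varepsilon')\in\A_{3,3}$, and then add the fourth summand $(-1,1,-1)$ to land in $\A_{3,4}$.

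Setting $\varepsilon:=\min(\varepsilon_1,\varepsilon_2)$, every point in a small neighborhood of $p$ then lies in $\A_{3,4}$, so $p$ is interior and in particular not on the boundary. There is no essential obstacle here; the only bookkeeping is to check that the thresholds $\varepsilon_1,\varepsilon_2$ produced by the lemmas can be taken uniformly over the entire $(u,v)$-neighborhood, but this uniformity is precisely how Lemmas~\ref{lm_1200up} and~\ref{lm_2100down} are stated, provided that the three gaps $1-v$, $v-u$, $u+1$ remain bounded below by $\delta$.
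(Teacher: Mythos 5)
Your proposal is correct and matches the paper's own proof exactly: it applies Lemma~\ref{lm_1200up} with the pair $(-1,u)$ to produce points above and Lemma~\ref{lm_2100down} with the pair $(u,v)$ to produce points below, combined with the two-dimensional Inverse Function Theorem argument from the proof of Proposition~\ref{prop:C+C+1+-1}. The only difference is that the paper states the argument by reference to Proposition~\ref{prop:C+2C+1}, while you have written out the details.
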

\begin{proof}
The proof is similar to the proof of Proposition~\ref{prop:C+2C+1}. For $(u,v)$ in a small neighborhood of $(s,t)$, we find points in $\A_{3,4}$ above 
\[
2(u,u^2,u^3)+(v,v^2,v^3)+(-1,1,-1)
\]
using Lemma~\ref{lm_1200up} and we find points below using Lemma~\ref{lm_2100down}. Note here that in the former case the role of the pair $(s,t)$ from Lemma~\ref{lm_1200up} is played by $(-1,u)$.
\end{proof}

By combining Propositions~\ref{prop_2par}, \ref{prop:2C+2C}, \ref{prop:C+C+1+-1}, \ref{prop:C+2C+1} and~\ref{prop:2C+C-1}, we see that the boundary of $\A_{3,n}$ is contained in $\B^{+}_{n}\cup\B^{-}_{n}$ for $n=4$. We now use this knowledge to prove the same for $n>4$. 

\begin{thm}\label{thm:boundaryA3nhalf}
The boundary of $\A_{3,n}$ is contained in the union of $\B^{+}_{n}$ and $\B^{-}_{n}$.
\end{thm}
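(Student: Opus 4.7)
The plan is to combine the already-established $n = 4$ case (obtained by assembling Propositions~\ref{prop_2par}, \ref{prop:2C+2C}, \ref{prop:C+C+1+-1}, \ref{prop:C+2C+1} and~\ref{prop:2C+C-1}) with Lemma~\ref{lm:elim} to handle every $n \geq 3$. I would start by taking a boundary point $p \in \partial \A_{3, n}$ and using Proposition~\ref{prop_2par} together with the remarks following it to write
\[
p = k\begin{pmatrix}s\\s^2\\s^3\end{pmatrix} + \ell\begin{pmatrix}t\\t^2\\t^3\end{pmatrix} + a\begin{pmatrix}1\\1\\1\end{pmatrix} + b\begin{pmatrix}-1\\1\\-1\end{pmatrix}
\]
with $-1 \leq s \leq t \leq 1$, integers $k, \ell \geq 1$ and $a, b \geq 0$ satisfying $k + \ell + a + b = n$. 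If $\ell = 1$ and $b = 0$, then $p \in \C^+_{k, a} \subseteq \B^+_n$; symmetrically, if $k = 1$ and $a = 0$, then $p \in \C^-_{\ell, b} \subseteq \B^-_n$. In either situation nothing further is needed.

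If neither condition holds, then $(k, \ell, a, b)$ satisfies one of the four conditions (1)--(4) listed at the end of Section~\ref{sect:2rep}. I would then use Lemma~\ref{lm:elim} to reduce to the minimal multiplicity pattern
\[
(k', \ell', a', b') \in \{(1, 1, 1, 1),\ (1, 2, 1, 0),\ (2, 1, 0, 1),\ (2, 2, 0, 0)\}
\]
at $n' = 4$. For strictly interior, distinct parameters $-1 < s < t < 1$, Propositions~\ref{prop:C+C+1+-1}, \ref{prop:C+2C+1}, \ref{prop:2C+C-1} and~\ref{prop:2C+2C} assert exactly that the reduced point $p'$ is not on the boundary of $\A_{3, 4}$, and Lemma~\ref{lm:elim} then propagates this back to the statement that $p$ is not on the boundary of $\A_{3, n}$, contradicting our initial assumption.

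The hard part will be handling the degenerate parameter values $s = -1$, $t = 1$, or $s = t$, where the strict inequalities required by the cited $n = 4$ propositions fail. When $s = -1$, the $k$ copies of $(-1, 1, -1)$ can be absorbed into the $b$-term (and symmetrically when $t = 1$), producing an alternative representation with fewer interior parameters. A short case-by-case inspection then shows that all such absorptions, together with the subcases $s = t$, place $p$ inside $\B^+_n \cup \B^-_n$ except for one stubborn configuration: the point $p' = 2(\tau, \tau^2, \tau^3) + (1, 1, 1) + (-1, 1, -1)$ at $n' = 4$ for some $\tau \in (-1, 1)$, which arises, e.g., from $(k, \ell, a, b) = (1, 1, 1, 1)$ with $s = t = \tau$. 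For this exceptional case I would exhibit an alternative representation of $p'$ with four pairwise distinct values in $(-1, 1)$: perturbing the fourth elementary symmetric function of the tuple $(\tau, \tau, 1, -1)$ from its original value $-\tau^2$ to $-\tau^2 + \delta$ with $\delta > 0$ small yields the quartic $(t - \tau)^2(t - 1)(t + 1) + \delta$, whose four real roots lie two near $\tau$ and one just inside each of $\pm 1$, all inside $(-1, 1)$. Proposition~\ref{prop_2par} applied in its contrapositive form to this alternative representation then places $p'$ in the interior of $\A_{3, 4}$; the same decomposition argument that underlies Lemma~\ref{lm:elim} transports this conclusion back to $p$, yielding the desired contradiction and closing the proof.
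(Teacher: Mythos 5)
Your proposal follows the same overall route as the paper's proof: assemble the $n=4$ case from Propositions~\ref{prop:2C+2C}, \ref{prop:C+C+1+-1}, \ref{prop:C+2C+1}, \ref{prop:2C+C-1}, and propagate to $n>4$ via Lemma~\ref{lm:elim}, after observing that a boundary point lies in $\B^+_n\cup\B^-_n$ precisely when its $(k,\ell,a,b)$-type avoids conditions (1)--(4). The difference is that you explicitly address the degenerate parameter values $s=-1$, $t=1$, $s=t$, whereas the paper's proof of Theorem~\ref{thm:boundaryA3nhalf} only treats the open-cell case $-1<s<t<1$ and does not spell out why the boundary of the parameter triangle may be discarded. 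Your care here is warranted: after absorbing $s=-1$ into the $b$-term and $t=1$ into the $a$-term, one is left with at most one interior value $\tau$ of some multiplicity $m$, and the configuration $p = m(\tau,\tau^2,\tau^3) + a(1,1,1) + b(-1,1,-1)$ with $m\geq2$, $a\geq1$, $b\geq1$ is genuinely not contained in $\B^+_n\cup\B^-_n$, is not covered by any of the four cited propositions (which all assume two distinct interior values), and cannot in general be rewritten with two distinct interior values in the $S_{k,\ell,a,b}$ form. Your quartic perturbation $(t-\tau)^2(t^2-1)+\delta$ handles this correctly: for small $\delta>0$ its four roots are distinct and lie in $(-1,1)$, only the degree-$4$ power sum changes, so one obtains a representation of $p$ with four interior values and Proposition~\ref{prop_2par} shows $p\notin\partial\A_{3,4}$; the Minkowski-sum argument underlying Lemma~\ref{lm:elim} (which, as you note, works perfectly well without the strictness hypothesis stated in the lemma) then lifts the conclusion to general $m$, $a$, $b$. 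In short, you reproduce the paper's argument and in addition supply the missing treatment of the degenerate boundary cells, with the $(t-\tau)^2(t^2-1)+\delta$ trick playing the role that the circle $\alpha^2+\beta^2=\tfrac12(s-t)^2$ plays in Proposition~\ref{prop:2C+2C}.
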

\begin{proof}
For $n=3$, this follows directly from Proposition~\ref{prop_2par}. For $n=4$, we additionally use Propositions~\ref{prop:2C+2C}, \ref{prop:C+C+1+-1}, \ref{prop:C+2C+1} and~\ref{prop:2C+C-1}. For $n>4$, we need to show that points of the form
\[
k\begin{pmatrix}s\\s^2\\s^3\end{pmatrix}+\ell\begin{pmatrix}t\\t^2\\t^3\end{pmatrix}+a\begin{pmatrix}1\\1\\1\end{pmatrix}+b\begin{pmatrix}-1\\1\\-1\end{pmatrix}
\]
with $-1< s< t<1$, $k,\ell\geq1$ and $a,b\geq0$ are not on the boundary of $\A_{3,n}$ when one of the following conditions holds:
\begin{enumerate}
\item $k,\ell\geq2$,
\item $k=\ell=1$ and $a,b>0$,
\item $k=1$, $\ell>1$ and $a>0$,
\item $k>1$, $\ell=1$ and $b>0$.
\end{enumerate}
This is done by combining Lemma~\ref{lm:elim} with Propositions~\ref{prop:2C+2C}, \ref{prop:C+C+1+-1}, \ref{prop:C+2C+1} and~\ref{prop:2C+C-1}.
\end{proof}

\section{The semi-algebraic components of the boundary of \texorpdfstring{$\A_{3,n}$}{A3n}}\label{sect:kC+ellC}

Consider the sets
\[
\left\{k\begin{pmatrix}s\\s^2\\s^3\end{pmatrix}+\ell\begin{pmatrix}t\\t^2\\t^3\end{pmatrix}\,\middle|\,-1\leq s\leq t\leq1\right\}+a\begin{pmatrix}1\\1\\1\end{pmatrix}+b\begin{pmatrix}-1\\1\\-1\end{pmatrix}
\]
for $k,\ell\geq1$. Recall that
\begin{align*}
A_{k\ell} &= k\ell(k+\ell)^2,\\
B_{k\ell}(x,y) &= 2k\ell x(2x^2 - 3(k+\ell)y),\\
C_{k\ell}(x,y) &= x^6 - 3(k+\ell)x^4y + 3(k^2+k\ell+\ell^2)x^2y^2 - (k-\ell)^2(k+\ell)y^3,\\
D_{k\ell}(x,y) &= (k+\ell)y-x^2,\\
f_{k\ell}(x,y,z) &= A_{k\ell}z^2+B_{k\ell}(x,y)z+C_{k\ell}(x,y)
\end{align*}
and $B_{k\ell}^2-4A_{k\ell}C_{k\ell}=4k\ell(\ell-k)^2D_{k\ell}^3$ from Section~\ref{sect:mainres}. Our goal for this section is to prove the following proposition and theorem.

\begin{prop}\label{prop:f==g^2orirr}
If $k=\ell$, then 
\[
f_{k\ell}(x,y,z)=A_{k\ell}\left(z+\frac{B_{k\ell}(x,y)}{2A_{k\ell}}\right)^2
\]
decomposes the polynomial $f_{k\ell}$ into irreducible factors over $\QQ$. If $k\neq\ell$, then $f_{k\ell}$ is irreducible over $\CC$.
\end{prop}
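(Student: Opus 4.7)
The plan is to exploit the discriminant identity $B_{k\ell}^2-4A_{k\ell}C_{k\ell}=4k\ell(\ell-k)^2D_{k\ell}^3$ already recorded at the start of the section, viewing $f_{k\ell}$ as a quadratic polynomial in $z$ with coefficients in $\mathbb{Q}[x,y]$ (resp.\ $\mathbb{C}[x,y]$).

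For the case $k=\ell$, the identity shows that the discriminant vanishes identically. Completing the square in $z$ then immediately yields
\[
f_{k\ell}(x,y,z)=A_{k\ell}\left(z+\frac{B_{k\ell}(x,y)}{2A_{k\ell}}\right)^2,
\]
and since $2A_{k\ell}\in\mathbb{Q}^{\times}$ the inner expression lies in $\mathbb{Q}[x,y,z]$. Being of degree one in $z$, it is irreducible over $\mathbb{Q}$, while the leading scalar $A_{k\ell}$ is a unit; so this is exactly the factorization into $\mathbb{Q}$-irreducibles claimed.

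For the case $k\neq\ell$, I would argue by contradiction. Suppose $f_{k\ell}=PQ$ is a nontrivial factorization in $\mathbb{C}[x,y,z]$. Since $f_{k\ell}$ has degree $2$ in $z$ and the $z^2$-coefficient is the nonzero scalar $A_{k\ell}$, inspecting the leading $z$-coefficient of $PQ$ forces both $P$ and $Q$ to have degree exactly one in $z$ with constant (nonzero complex) leading coefficients. Hence we may write $f_{k\ell}=A_{k\ell}(z-r_1)(z-r_2)$ with $r_1,r_2\in\mathbb{C}[x,y]$, and substituting into the discriminant identity gives
\[
A_{k\ell}^2(r_1-r_2)^2=4k\ell(\ell-k)^2D_{k\ell}^3.
\]
In particular $D_{k\ell}^3$ would be a square in $\mathbb{C}[x,y]$. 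But $D_{k\ell}=(k+\ell)y-x^2$ is irreducible in $\mathbb{C}[x,y]$ (it is linear in $y$ with nonzero leading coefficient $k+\ell$), and $\mathbb{C}[x,y]$ is a UFD, so the cube of an irreducible element is never a square. This contradicts the assumed factorization and establishes irreducibility.

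The main obstacle is essentially bookkeeping rather than genuine difficulty: one must rule out the other possible shapes of a factorization (the case in which one factor lies in $\mathbb{C}[x,y]$ is handled by observing that it would then have to divide the unit $A_{k\ell}$, hence be constant), and then cleanly invoke the UFD property of $\mathbb{C}[x,y]$ to preclude $D_{k\ell}^3$ from being a square. The heavy algebra—the explicit form of the discriminant—has already been carried out in Section~\ref{sect:mainres}, so the argument should remain short.
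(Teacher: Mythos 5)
Your proof is correct, but it follows a genuinely different path than the paper's. For the irreducibility when $k\neq\ell$, the paper exploits the fact that $f_{k\ell}$ is homogeneous for the weighted grading $\deg(x)=1$, $\deg(y)=2$, $\deg(z)=3$: any nontrivial factorization would have to take the form $A_{k\ell}(z+ax^3+bxy)(z+cx^3+dxy)$, and the constant-in-$z$ part of such a product contains no $y^3$ monomial, contradicting the nonvanishing coefficient $-(k-\ell)^2(k+\ell)$ of $y^3$ in $C_{k\ell}$. You instead run the discriminant identity $B_{k\ell}^2-4A_{k\ell}C_{k\ell}=4k\ell(\ell-k)^2D_{k\ell}^3$ through the UFD $\CC[x,y]$: a factorization $f_{k\ell}=A_{k\ell}(z-r_1)(z-r_2)$ with $r_1,r_2\in\CC[x,y]$ would make $D_{k\ell}^3$ a square up to a nonzero constant, impossible since $D_{k\ell}=(k+\ell)y-x^2$ is irreducible and $3$ is odd. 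Both arguments are short and complete (your last paragraph correctly disposes of the degree-$(0,2)$ split by noting a $z$-free factor would divide the unit $A_{k\ell}$); the paper's route is an elementary coefficient comparison tailored to the weighted-homogeneous structure, while yours is slightly more conceptual and reuses the discriminant identity that the section already records, so it arguably makes better use of what is on the page. Your handling of the case $k=\ell$ (complete the square, then observe that a $z$-monic linear-in-$z$ polynomial is irreducible) matches what the paper dismisses as ``easy.''
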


\begin{thm}\label{thm:kC+ellC}
The set
\[
\left\{k\begin{pmatrix}s\\s^2\\s^3\end{pmatrix}+\ell\begin{pmatrix}t\\t^2\\t^3\end{pmatrix}\,\middle|\,-1\leq s\leq t\leq1\right\}
\]
consists of all points 
\[
(x,y,z)\in[-(k+\ell),(k+\ell)]\times[0,(k+\ell)]\times[-(k+\ell),(k+\ell)]
\]
such that $f_{k\ell}(x,y,z)=0$, the inequalities 
\[
0\leq k\ell D_{k\ell}(x,y)\leq \min\{k^2(k+\ell +x)^2,\ell^2(k+\ell-x)^2\}
\]
hold and in addition the following requirements are met:
\begin{itemize}
\item If $k<\ell$, then the inequality $z\leq \frac{-B_{k\ell}(x,y)}{2A_{k\ell}}$ must hold. 
\item If $k=\ell$, then equation $z=\frac{-B_{k\ell}(x,y)}{2A_{k\ell}}$ must hold. 
\item If $k>\ell$, then the inequality $z\geq \frac{-B_{k\ell}(x,y)}{2A_{k\ell}}$ must hold. 
\end{itemize}
\end{thm}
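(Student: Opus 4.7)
The plan is to analyze the polynomial parametrization $\varphi(s,t) = k(s,s^2,s^3) + \ell(t,t^2,t^3)$ on the triangle $T = \{(s,t) : -1 \leq s \leq t \leq 1\}$ and show that $(x,y,z) \in \varphi(T)$ if and only if it satisfies the stated conditions. Two elementary identities drive the argument: the equality
\[
D_{k\ell}(x,y) = (k+\ell)y - x^2 = k\ell(t-s)^2
\]
valid when $(x,y) = (ks+\ell t, ks^2+\ell t^2)$, and the key identity
\[
(k+\ell)^2 z - 3(k+\ell)xy + 2x^3 = k\ell(k-\ell)(t-s)^3,
\]
valid when $(x,y,z) = \varphi(s,t)$, which I verify by expanding both sides in $s,t$ and comparing the coefficients of $s^3, s^2t, st^2, t^3$. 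Since $2A_{k\ell}z + B_{k\ell} = 2k\ell\bigl[(k+\ell)^2 z - 3(k+\ell)xy + 2x^3\bigr]$ and $B_{k\ell}^2 - 4A_{k\ell}C_{k\ell} = 4k\ell(k-\ell)^2 D_{k\ell}^3$ (already on record), squaring the key identity and multiplying by $k\ell$ is equivalent to $f_{k\ell}(x,y,z) = 0$.

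For the forward direction, let $(x,y,z) = \varphi(s,t)$ with $(s,t) \in T$. The box $[-(k+\ell),k+\ell] \times [0, k+\ell] \times [-(k+\ell), k+\ell]$ contains $(x,y,z)$ directly from $|s|, |t| \leq 1$. The first identity gives $k\ell D_{k\ell} = (k\ell)^2(t-s)^2 \geq 0$, hence $\sqrt{k\ell D_{k\ell}} = k\ell(t-s)$. Using $k+\ell+x = k(1+s) + \ell(1+t) \geq 0$ and $k+\ell-x = k(1-s) + \ell(1-t) \geq 0$, the inequalities $\ell(t-s) \leq k(1+s) + \ell(1+t)$ and $k(t-s) \leq k(1-s) + \ell(1-t)$ reduce respectively to $(k+\ell)(1+s) \geq 0$ and $(k+\ell)(1-t) \geq 0$; squaring delivers the stated bounds on $k\ell D_{k\ell}$. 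The key identity yields $f_{k\ell}(x,y,z) = 0$ as explained above, and moreover shows that $2A_{k\ell}z + B_{k\ell}$ has the sign of $k-\ell$ (since $t-s \geq 0$), matching the stated sign condition on $z$ relative to $-B_{k\ell}/(2A_{k\ell})$.

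For the reverse direction, suppose $(x,y,z)$ satisfies all the conditions. Set
\[
s = \frac{kx - \sqrt{k\ell D_{k\ell}(x,y)}}{k(k+\ell)}, \qquad t = \frac{\ell x + \sqrt{k\ell D_{k\ell}(x,y)}}{\ell(k+\ell)},
\]
both real since $D_{k\ell} \geq 0$; these are obtained by eliminating $t$ from $x = ks+\ell t$ and $y = ks^2+\ell t^2$ and applying the quadratic formula to the resulting relation $k(k+\ell)s^2 - 2kxs + (x^2 - \ell y) = 0$. A direct calculation gives $t - s = \sqrt{D_{k\ell}/(k\ell)} \geq 0$, $ks + \ell t = x$, and $ks^2 + \ell t^2 = y$. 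The inequalities $k\ell D_{k\ell} \leq k^2(k+\ell+x)^2$ and $k\ell D_{k\ell} \leq \ell^2(k+\ell-x)^2$, combined with $x \in [-(k+\ell), k+\ell]$ (which makes $k+\ell\pm x$ nonnegative), give $s \geq -1$ and $t \leq 1$ after taking square roots. Setting $z_0 = ks^3 + \ell t^3$, the forward direction shows $f_{k\ell}(x,y,z_0) = 0$ and that $z_0$ lies on the correct side of $-B_{k\ell}/(2A_{k\ell})$. When $k \neq \ell$ the quadratic $f_{k\ell}(x,y,\cdot)$ has two roots symmetric about the vertex $-B_{k\ell}/(2A_{k\ell})$, and the sign condition selects one of them uniquely, forcing $z = z_0$. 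When $k = \ell$ the discriminant $4k\ell(k-\ell)^2 D_{k\ell}^3$ vanishes, so $f_{k\ell}(x,y,\cdot)$ is a perfect square and $f_{k\ell}(x,y,z) = 0$ forces $z = -B_{k\ell}/(2A_{k\ell}) = z_0$.

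The main obstacle is verifying the key polynomial identity $(k+\ell)^2 z - 3(k+\ell)xy + 2x^3 = k\ell(k-\ell)(t-s)^3$; this is a direct but moderately tedious expansion, and once in hand it organizes the entire proof. The remaining work consists of careful manipulation of inequalities (via squaring and checking the nonnegativity of the relevant quantities) together with the simple observation that a quadratic in $z$ paired with a sign condition relative to its vertex pins down its root uniquely.
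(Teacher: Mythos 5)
Your proof is correct and follows essentially the same route as the paper: you solve the first two coordinate equations to express $ks$ and $\ell t$ in terms of $x,y,\sqrt{k\ell D_{k\ell}}$, translate the constraints $-1\leq s$ and $t\leq 1$ into the stated bounds on $k\ell D_{k\ell}$, and then use an identity to pin down which root of $f_{k\ell}(x,y,\cdot)$ is realized. Your ``key identity'' $(k+\ell)^2 z - 3(k+\ell)xy + 2x^3 = k\ell(k-\ell)(t-s)^3$ is, after multiplying by $2k\ell$ and substituting $\sqrt{k\ell D_{k\ell}} = k\ell(t-s)$, exactly the paper's identity $k^2\ell^2(k+\ell)^3\bigl(z+\tfrac{B_{k\ell}}{2A_{k\ell}}\bigr) = (k^2-\ell^2)\sqrt{k\ell D_{k\ell}}^3$, so the two proofs differ only in bookkeeping.
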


For the remainder of the section, we fix integers $k,\ell\geq1$ and we write 
\[
A=A_{k\ell}, B=B_{k\ell}, C=C_{k\ell}, D=D_{k\ell}, f=f_{k\ell}
\]
in order to simplify the used notation.

\begin{proof}[Proof of Proposition~\ref{prop:f==g^2orirr}]
The first statement is easy. Assume that $k\neq\ell$. To prove that $f$ is irreducible under this assumption, note that $f$ is homogeneous with respect to the grading where $\deg(x)=1$, $\deg(y)=2$ and $\deg(z)=3$. It follows that if $f$ is reducible, then 
\[
Az^2+B(x,y)z+C(x,y)=f=A(z+ax^3+bxy)(z+cx^3+dxy)
\]
for some $a,b,c,d$. However, this would imply that the coefficient 
\[
-(k-\ell)^2(k+\ell)
\]
of $C$ at $y^3$ equals $0$. This is a contradiction. So $f$ is irreducible.
\end{proof}

\begin{proof}[Proof of Theorem~\ref{thm:kC+ellC}]
Note that we have $x,z\in [-(k+\ell),(k+\ell)]$ and $y\in [0,(k+\ell)]$ for all points 
\[
\begin{pmatrix}x\\y\\z\end{pmatrix}\in\left\{k\begin{pmatrix}s\\s^2\\s^3\end{pmatrix}+\ell\begin{pmatrix}t\\t^2\\t^3\end{pmatrix}\,\middle|\,-1\leq s\leq t\leq1\right\}.
\]
So we let
\[
(x,y,z)\in[-(k+\ell),(k+\ell)]\times[0,(k+\ell)]\times[-(k+\ell),(k+\ell)]
\]
be a point and find out when it is contained in
\[
\left\{k\begin{pmatrix}s\\s^2\\s^3\end{pmatrix}+\ell\begin{pmatrix}t\\t^2\\t^3\end{pmatrix}\,\middle|\,-1\leq s\leq t\leq1\right\}.
\]
We start by looking at the first two coordinates. So we solve the system of equations
\begin{align*}
x&=ks+\ell t,\\
y&=ks^2+\ell t^2
\end{align*}
under the conditions that $-1\leq s\leq t\leq 1$. Solving the system, we find that
\[
(ks,\ell t)=\left(\frac{kx\pm\sqrt{k\ell D(x,y)}}{k+\ell},\frac{\ell x\mp\sqrt{k\ell D(x,y)}}{k+\ell}\right).
\]
So we need to assume that $k\ell D(x,y)\geq0$. Adding the condition $s\leq t$, we get
\[
(ks,\ell t)=\left(\frac{kx-\sqrt{k\ell D(x,y)}}{k+\ell},\frac{\ell x+\sqrt{k\ell D(x,y)}}{k+\ell}\right)
\]
and so the conditions $-1\leq s$ and $t\leq 1$ translate to
\[
\sqrt{k\ell D(x,y)}\leq k(k+\ell+x),\ell(k+\ell-x) .
\]
As $x\in[-(k+\ell),(k+\ell)]$, these conditions are equivalent to
\[
k\ell D(x,y)\leq k^2(k+\ell+x)^2,\ell^2(k+\ell-x)^2.
\]
Now, also consider the third coordinate $z=ks^3+\ell t^3$. One can check that $f(x,y,z)=0$. So if $k=\ell$, then we have
\[
z=\frac{-B(x,y)}{2A}
\]
by Proposition~\ref{prop:f==g^2orirr} and we are done. So assume that $k\neq \ell$. Then there are a priori two possibilities for $z$ given $x$ and $y$. However, given $s$ and $t$, it becomes clear that only one possibility remains. So we just need to find an inequality that selects the correct root of $f(x,y,-)$. One can check that
\[
k^2\ell^2(k+\ell)^3\left(z-\frac{-B(x,y)}{2A}\right)=(k^2-\ell^2)\sqrt{k\ell D(x,y)}^3.
\]  
So we find that  
\[
z\leq\frac{-B(x,y)}{2A}
\]
when $k<\ell$ and
\[
z\geq\frac{-B(x,y)}{2A}
\]
when $k>\ell$. This concludes the proof.
\end{proof}

\section{The sets \texorpdfstring{$\B^{+}_{n}$}{Bn+} and \texorpdfstring{$\B^{-}_{n}$}{Bn-}}\label{sect:B+-}

We are now ready to prove Theorem~\ref{thm:boundaryA3n} and the Main Theorem. Let $n\geq3$ be an integer. Recall the following notation from Section~\ref{sect:mainres}.
\begin{itemize}
\item We have
\begin{align*}
\C^{+}_{k,a}&=\left\{k\begin{pmatrix}s\\s^2\\s^3\end{pmatrix}+\begin{pmatrix}t\\t^2\\t^3\end{pmatrix}\,\middle|\,-1\leq s\leq t\leq 1\right\}+a\begin{pmatrix}1\\1\\1\end{pmatrix},\\
\C^{-}_{\ell,b}&=\left\{\begin{pmatrix}s\\s^2\\s^3\end{pmatrix}+\ell\begin{pmatrix}t\\t^2\\t^3\end{pmatrix}\,\middle|\,-1\leq s\leq t\leq 1\right\}+b\begin{pmatrix}-1\\1\\-1\end{pmatrix}
\end{align*}
for all integers $k,\ell\geq1$ and $a,b\geq0$.
\item We have $\B^{+}_{n}=\bigcup_{k=1}^{n-1}\C^{+}_{k,n-k-1}$ and $\B^{-}_{n}=\bigcup_{\ell=1}^{n-1}\C^{-}_{\ell,n-\ell-1}$.
\item The set $\B^{\flat}_{n}$ consists of all points $(x,y)\in\RR^2$ such that $ny\geq x^2$ and 
\[
y\leq n-1+(x+2i-(n-1))^2
\]
for each $i\in\{0,\dots,n-1\}$.
\item The projection map $\pi\colon\RR^3\to\RR^2$ sends $(x,y,z)\mapsto(x,y)$.
\end{itemize}
We start by listing some properties of $\B^{+}_{n}$ and $\B^{-}_{n}$. 

\begin{samepage}
\begin{prop}\label{prop:prop+}
Let $1\leq k\leq n-1$ be an integer.
\begin{itemize}
\item[(a)] The map
\begin{align*}
\mbox{\phantom{testtest}}\alpha_k\colon\{(s,t)\mid -1\leq s\leq t\leq 1\}&\to\pi(\C^{+}_{k,n-k-1}),\\
(s,t)&\mapsto k\begin{pmatrix}s\\s^2\end{pmatrix}+\begin{pmatrix}t\\t^2\end{pmatrix}+(n-k-1)\begin{pmatrix}1\\1\end{pmatrix},
\end{align*}
is a bijection.
\item[(b)] The boundary of $\pi(\C^{+}_{k,n-k-1})$ is the union of the following three sets:
\[
\mbox{\phantom{testtest}}\{\alpha_k(-1,t)\mid -1\leq t\leq 1\}, \{\alpha_k(s,s)\mid -1\leq s\leq 1\}, \{\alpha_k(s,1)\mid -1\leq s\leq 1\}. 
\]
\item[(c)] We have $\pi(\B^{+}_{n})=\B^{\flat}_{n}$.
\item[(d)] The projection map
\begin{align*}
\B^{+}_{n}&\to\B^{\flat}_{n},\\
(x,y,z)&\mapsto(x,y),
\end{align*}
is a bijection.
\end{itemize}
\end{prop}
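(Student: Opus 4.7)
The plan is to handle the four parts in order, with Theorem~\ref{thm:kC+ellC} in the case $\ell=1$ as the main input.

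For part~(a), surjectivity is immediate from the definition of $\pi(\C^{+}_{k,n-k-1})$. For injectivity, if $\alpha_k(s_1,t_1)=\alpha_k(s_2,t_2)$, subtracting $x$-components gives $k(s_1-s_2)=t_2-t_1$ and subtracting $y$-components gives $k(s_1-s_2)(s_1+s_2)=(t_2-t_1)(t_1+t_2)$; if $s_1\ne s_2$, dividing yields $s_1+s_2=t_1+t_2$, which contradicts $s_1\le t_1$ and $s_2\le t_2$. Equivalently, this is the two-dimensional version of the system solved in the proof of Theorem~\ref{thm:kC+ellC}. For part~(b), $\alpha_k$ is a continuous bijection from the compact triangle $T=\{(s,t):-1\le s\le t\le 1\}$ onto $\pi(\C^{+}_{k,n-k-1})$, hence a homeomorphism. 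Its Jacobian equals $2k(t-s)$, nonzero on the interior of $T$, so the inverse function theorem shows that $\alpha_k(\mathrm{int}(T))$ is open in $\RR^2$ and therefore coincides with the interior of $\pi(\C^{+}_{k,n-k-1})$. The topological boundary is then $\alpha_k(\partial T)$, which is the union of the three arcs along the edges $s=-1$, $s=t$ and $t=1$.

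For parts~(c) and~(d) I would begin by writing down the three boundary arcs explicitly: the $s=-1$ arc traces $y=(x-(n-2k-1))^2+(n-1)$ on $n-2k-2\le x\le n-2k$; the $s=t$ arc traces $y=(x-(n-k-1))^2/(k+1)+(n-k-1)$; and the $t=1$ arc traces $y=(x-(n-k))^2/k+(n-k)$. A direct comparison shows that the $s=t$ arc of $\pi(\C^{+}_{k,n-k-1})$ coincides with the $t=1$ arc of $\pi(\C^{+}_{k+1,n-k-2})$ (same parabola, same $x$-range). As $k$ varies, the $s=-1$ arcs together with the $t=1$ arc at $k=1$ reproduce the $n$ upper defining parabolas of $\B^{\flat}_{n}$, with vertices at $x=2j-(n-1)$ for $j=0,\dots,n-1$, while the $s=t$ arc at $k=n-1$ becomes $y=x^2/n$, the lower defining parabola $ny\ge x^2$.

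The main obstacle is promoting this boundary bookkeeping into the set equality $\bigcup_{k=1}^{n-1}\pi(\C^{+}_{k,n-k-1})=\B^{\flat}_{n}$ of~(c). My plan is to treat the $n-2$ shared $s=t$/$t=1$ parabolas as interior partition curves that, together with $\partial\B^{\flat}_{n}$, cut $\B^{\flat}_{n}$ into $n-1$ closed simply connected regions (verifying along the way that these partition curves only meet at the corner $(n,n)\in\partial\B^{\flat}_{n}$ and not in the interior). Each such region is then identified with the image of the appropriate $\alpha_k$ by matching boundaries. Part~(d) follows from~(a) and this partition: the projection $\B^{+}_{n}\to\B^{\flat}_{n}$ is surjective by~(c), and injective because on each shared interior parabola the two candidate preimages in $\C^{+}_{k,n-k-1}$ and $\C^{+}_{k+1,n-k-2}$ both equal $(k+1)(s,s^2,s^3)+(n-k-1)(1,1,1)$ for the same $s$, so their $z$-coordinates coincide.
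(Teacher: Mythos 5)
Your proposal is correct and follows essentially the same route as the paper: injectivity of $\alpha_k$ via the quadratic system, the Jacobian/inverse-function-theorem argument for the boundary of each piece, and matching the $s=t$ arc of $\pi(\C^{+}_{k,n-k-1})$ with the $t=1$ arc of $\pi(\C^{+}_{k+1,n-k-2})$ to deduce~(c) and~(d). You compute the boundary parabolas and the partition structure of $\B^{\flat}_{n}$ more explicitly than the paper (which is quite terse on this point), but the underlying argument is identical.
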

\end{samepage}
\begin{proof}
To see (a), note that the map clearly is surjective. For injectivity, one has to solve $\alpha_k(s,t)=(x,y)$ for $s,t$ under the condition that $s\leq t$. This yields at most one solution for all $(x,y)$. For $(b)$, note that the Jacobian of the map $\alpha_k$ has full rank at all points $(s,t)$ with $-1<t<s<1$. From $(b)$ follows that the boundary of $\pi(\B^{+}_{n})$ is the union of
\[
\left\{n\begin{pmatrix}s\\s^2\end{pmatrix}\,\middle|\,-1\leq s\leq 1\right\}
\]
and
\[
\left\{i\begin{pmatrix}-1\\1\end{pmatrix}+\begin{pmatrix}t\\t^2\end{pmatrix}+(n-i-1)\begin{pmatrix}1\\1\end{pmatrix}\,\middle|\,-1\leq t\leq1\right\}
\]
for $i=0,\dots,n-1$. So the set itself is indeed given by the inequalities defining~$\B^{\flat}_{n}$. Finally, to see (d), it suffices to note that $\pi(\C^{+}_{k,n-k-1}\cap \C^{+}_{k+1,n-k})$ is equal to
\[
\left\{(k+1)\begin{pmatrix}s\\s^2\end{pmatrix}+(n-k-1)\begin{pmatrix}1\\1\end{pmatrix}\,\middle|\,-1\leq s\leq 1\right\}
\]
for $k=1,\dots,n-2$.
\end{proof}

\begin{prop}\label{prop:prop-}
Let $1\leq\ell\leq n-1$ be an integer.
\begin{itemize}
\item[(a)] The map
\begin{align*}
\mbox{\phantom{testtest}}\beta_\ell\colon\{(s,t)\mid -1\leq s\leq t\leq 1\}&\to\pi(\C^{-}_{\ell,n-\ell-1}),\\
(s,t)&\mapsto\begin{pmatrix}s\\s^2\end{pmatrix}+\ell\begin{pmatrix}t\\t^2\end{pmatrix}+(n-\ell-1)\begin{pmatrix}-1\\1\end{pmatrix},
\end{align*}
is a bijection.
\item[(b)] The boundary of $\pi(\C^{-}_{\ell,n-\ell-1})$ is the union of the following three sets:
\[
\mbox{\phantom{testtest}}\{\beta_\ell(-1,t)\mid -1\leq t\leq 1\}, \{\beta_\ell(t,t)\mid -1\leq t\leq 1\}, \{\beta_\ell(s,1)\mid -1\leq s\leq 1\}. 
\]
\item[(c)] We have $\pi(\B^{-}_{n})=\B^{\flat}_{n}$.
\item[(d)] The projection map
\begin{align*}
\B^{-}_{n}&\to\B^{\flat}_{n},\\
(x,y,z)&\mapsto(x,y),
\end{align*}
is a bijection.
\end{itemize}
\end{prop}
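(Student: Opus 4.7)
The proof will mirror that of Proposition~\ref{prop:prop+}, interchanging the roles played by the two twisted cubic summands. For part (a), surjectivity is immediate from the definition. For injectivity, I would solve $\beta_\ell(s,t)=(x,y)$ by using the first coordinate to eliminate $s=x-(n-\ell-1)+1-\ell t$ (adjusting for the shift), substituting into the second coordinate to obtain a quadratic in $t$. The two resulting solutions are exchanged by swapping the roles of $s$ and $\ell t$ (up to the asymmetry of shifts), and the constraint $s\leq t$ singles out at most one. For part (b), the Jacobian of $\beta_\ell$ is $\begin{pmatrix}1 & \ell \\ 2s & 2\ell t\end{pmatrix}$, with determinant $2\ell(t-s)$, which is nonzero on the interior $-1<s<t<1$. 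By the inverse function theorem, images of interior points are interior, so the image boundary is contained in the image of the parameter boundary, namely the three listed arcs.

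For parts (c) and (d), the plan is to identify which portions of the arcs from (b), combined across $\ell=1,\dots,n-1$, survive in the boundary of $\pi(\B^-_n)$, and which get absorbed by the interior of the union. The key identity is
\[
\beta_\ell(s,s)=(\ell+1)\begin{pmatrix}s\\s^2\end{pmatrix}+(n-\ell-1)\begin{pmatrix}-1\\1\end{pmatrix}=\beta_{\ell+1}(-1,s),
\]
which shows that the diagonal $\{\beta_\ell(s,s)\}$ and the left edge $\{\beta_{\ell+1}(-1,t)\}$ coincide; this identifies
\[
\pi\bigl(\C^{-}_{\ell,n-\ell-1}\cap\C^{-}_{\ell+1,n-\ell-2}\bigr)=\left\{(\ell+1)\begin{pmatrix}s\\s^2\end{pmatrix}+(n-\ell-1)\begin{pmatrix}-1\\1\end{pmatrix}\,\middle|\,-1\leq s\leq 1\right\}
\]
for $\ell=1,\dots,n-2$. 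As in the proof of Proposition~\ref{prop:prop+}, this causes these interior arcs to cancel, leaving as the boundary of $\pi(\B^-_n)$ the parabola $\{\beta_{n-1}(s,s)\}=\{n(s,s^2)\}$ together with the translated parabolas $\{\beta_\ell(s,1)\}$ for $\ell=1,\dots,n-1$ and the single arc $\{\beta_1(-1,t)\}$. A short substitution check confirms that $\beta_\ell(s,1)$ traces the curve $y=n-1+(x+2i-(n-1))^2$ with $i=n-1-\ell\in\{0,\dots,n-2\}$, and $\beta_1(-1,t)$ traces it with $i=n-1$. This exactly matches the defining curves of $\B^{\flat}_n$, giving (c); then (d) follows because the cells $\pi(\C^-_{\ell,n-\ell-1})$ cover $\B^{\flat}_n$ and meet only along the identified parabolic arcs, each cell lifting bijectively via part (a).

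The main obstacle is purely bookkeeping: verifying that the shift vector $(n-\ell-1)(-1,1)$ in $\beta_\ell$, combined with the linear coefficient $\ell$ on the $t$-summand, produces boundary arcs whose equations match the translated parabolas in the definition of $\B^{\flat}_n$ with the correct index $i=n-1-\ell$. This is a direct but slightly fiddly computation; once it is carried out, the rest of the argument runs in parallel with the proof of Proposition~\ref{prop:prop+}, and no new geometric ideas are required.
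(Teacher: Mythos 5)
Your proposal is correct and runs exactly in parallel with the paper's proof of Proposition~\ref{prop:prop+}; the paper's own proof of this statement is literally just ``The proofs are similar to those of Proposition~\ref{prop:prop+},'' and you have filled in the intended details faithfully, including the correct identification $\beta_\ell(s,s)=\beta_{\ell+1}(-1,s)$ (the mirror of $\alpha_k(s,s)=\alpha_{k+1}(s,1)$ in the $+$ case) and the index correspondence $i=n-1-\ell$. One bookkeeping slip: in part~(a), solving the first coordinate $s+\ell t-(n-\ell-1)=x$ gives $s=x+(n-\ell-1)-\ell t$, not $s=x-(n-\ell-1)+1-\ell t$; this does not affect the structure of your argument.
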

\begin{proof}
The proofs are similar to those of Proposition~\ref{prop:prop+}.
\end{proof}

The decomposition of $\B^{\flat}_n$ as a union of the projections of $\C^{+}_{1,n-2},\dots,\C^{+}_{n-1,0}$ is visualized in Figure~\ref{fig:projection+}. We note that the decomposition of $\B^{\flat}_n$ as a union of the projections of $\C^{-}_{1,n-2},\dots,\C^{-}_{n-1,0}$ looks similar but is mirrored along the vertical~axis.

\begin{figure}[ht]
  \centering
  \pgfplotsset{every axis/.append style={
      axis x line=middle,
      axis y line=left,
      axis line style={->,color=black},
      xlabel={$s$},
      ylabel={$t$},
  }}
  \begin{tikzpicture}
    \small
    \begin{axis}[
      declare function={
        alpha1(\n,\k,\s,\t) = \k*\s + \t + \n-\k-1;
        alpha2(\n,\k,\s,\t) = \k*\s^2 + \t^2 + \n-\k-1;
      },
      width=\textwidth,
      axis equal image,
      clip=false,
      xmin=-6,xmax=6,
      ymin=-0,ymax=6,
      xtick={-6,-4,0,4,6},
      xticklabels={$-n$,$-n+2$,$0$,$n-2$,$n$},
      ytick={0,1,5,6},
      yticklabels={$0$,$1$,$n-1$,$n$},
      hide axis,
      ]
      \addplot[domain=-1:1,samples=50]({alpha1(6,1,-1,x)}, {alpha2(6,1,-1,x)});
      \addplot[domain=-1:1,samples=50]({alpha1(6,2,-1,x)}, {alpha2(6,2,-1,x)});
      \addplot[domain=-1:-.4,samples=20]({alpha1(6,3,-1,x)}, {alpha2(6,3,-1,x)});
      \addplot[domain=-.4:.6,samples=20,dotted]({alpha1(6,3,-1,x)}, {alpha2(6,3,-1,x)});
      \addplot[domain=.6:1,samples=20]({alpha1(6,3,-1,x)}, {alpha2(6,3,-1,x)});
      \addplot[domain=-1:1,samples=50]({alpha1(6,4,-1,x)}, {alpha2(6,4,-1,x)})
        node[below right,pos=.4] {\scriptsize $\alpha_{n-2}(-1,s)$};
      \addplot[domain=-1:1,samples=50]({alpha1(6,5,-1,x)}, {alpha2(6,5,-1,x)})
        node[below right,pos=.4] {\scriptsize $\alpha_{n-1}(-1,s)$};
      \addplot[domain=-1:1,samples=50]({alpha1(6,1, x,1)}, {alpha2(6,1, x,1)});
      \addplot[domain=-1:1,samples=50]({alpha1(6,2, x,1)}, {alpha2(6,2, x,1)});
      \node at (axis cs:4,4.6) {$\pi(\C^{+}_{1,n-2})$};
      \addplot[domain=-1:1,samples=50]({alpha1(6,3, x,1)}, {alpha2(6,3, x,1)});
      \node at (axis cs:3,3.6) {$\pi(\C^{+}_{2,n-3})$};
      \addplot[domain=-1:1,samples=50]({alpha1(6,4, x,1)}, {alpha2(6,4, x,1)});
      \node at (axis cs:1.5,2.8) {$\iddots$};
      \addplot[domain=-1:1,samples=50]({alpha1(6,5, x,1)}, {alpha2(6,5, x,1)})
        node[above,pos=.25,rotate=-51] {\scriptsize $\alpha_{n-2}(s,s)$}
        node[below left,pos=.3,rotate=-51] {\scriptsize $\alpha_{n-1}(s,1)$};
      \node at (axis cs:0,2) {$\pi(\C^{+}_{n-2,1})$};
      \addplot[domain=-1:1,samples=50]({alpha1(6,5, x,x)}, {alpha2(6,5, x,x)})
        node[above,pos=.25,rotate=-50] {\scriptsize $\alpha_{n-1}(s,s)$};
      \node at (axis cs:-1.2,1) {$\pi(\C^{+}_{n-1,0})$};
    \end{axis}
  \end{tikzpicture}
  \caption{The set $\B^{\flat}_n =\pi(\C^{+}_{1,n-2})\cup\dots\cup\pi(\C^{+}_{n-1,0})$.}
  \label{fig:projection+}
\end{figure}

We can now prove Theorem~\ref{thm:boundaryA3n}.

\begin{proof}[Proof of Theorem~\ref{thm:boundaryA3n}]
We already know that (b) holds by Propositions~\ref{prop:prop+} and~\ref{prop:prop-}. We know that $\B^{+}_{n},\B^{-}_{n}\subseteq\A_{3,n}$, we know that the boundary of $\A_{3,n}$ is contained in $\B^{+}_{n}\cup\B^{-}_{n}$ by Theorem~\ref{thm:boundaryA3nhalf} and we know that the projection maps
\begin{align*}
  &\begin{aligned}
    \B^{+}_{n}&\to\B^{\flat}_{n},\\
    (x,y,z)&\mapsto(x,y),
  \end{aligned}
  &&\text{and}
  &\begin{aligned}
    \B^{-}_{n}&\to\B^{\flat}_{n},\\
    (x,y,z)&\mapsto(x,y),
  \end{aligned}
\end{align*}
are bijections by Propositions~\ref{prop:prop+} and~\ref{prop:prop-}. Together these statements imply (a). Let $(x,y)\in\B^{\flat}_{n}$ be a point. Then there exist unique numbers $z^{+},z^{-}\in\RR$ such that $(x,y,z^{+})\in\B^{+}_{n}$ and $(x,y,z^{-})\in\B^{-}_{n}$  by Propositions~\ref{prop:prop+} and~\ref{prop:prop-}. Our goal is to prove that $z^{+}\geq z^{-}$ with equality if and only if $(x,y)$ lies on the boundary of $\B_n^{\flat}$. Let $z\in\RR$ be a real number. From (a) and (b) it is clear that $(x,y,z)\in\A_{3,n}$ if and only if $z$ lies between $z^{+}$ and $z^{-}$. So $z^{+}=z^{-}$ when $(x,y)$ lies on the boundary of~$\B_n^{\flat}$. And, to prove that $z^{+}>z^{-}$ otherwise, it suffices to show that there exists a $z\in\RR$ such that $z<z^{+}$ and $(x,y,z)\in\A_{3,n}$. Now, let
\[
p=k\begin{pmatrix}s\\s^2\\s^3\end{pmatrix}+\begin{pmatrix}t\\t^2\\t^3\end{pmatrix}+(n-k-1)\begin{pmatrix}1\\1\\1\end{pmatrix}\in\C^{+}_{k,n-k-1}
\]
be a point where $-1\leq s\leq t\leq 1$ and recall Lemmas~\ref{lm_1110down} and~\ref{lm_2100down}. If $-1<s<t<1$ and $k=1$, then there is a point in $\A_{3,3}$ below 
\[
\begin{pmatrix}s\\s^2\\s^3\end{pmatrix}+\begin{pmatrix}t\\t^2\\t^3\end{pmatrix}+\begin{pmatrix}1\\1\\1\end{pmatrix}
\]
and hence a point in $\A_{3,n}$ below $p$. If $-1<s<t\leq1$ and $k\geq2$, then there is a point in $\A_{3,3}$ below 
\[
2\begin{pmatrix}s\\s^2\\s^3\end{pmatrix}+\begin{pmatrix}t\\t^2\\t^3\end{pmatrix}
\]
and hence a point in $\A_{3,n}$ below $p$. Taking into account how the sets $\C^{+}_{k,n-k-1}$ intersect, we find that there is a point in $\A_{3,n}$ below $p$ unless $s=-1$, $(k,t)=(1,1)$ or $(s,k)=(t,n-1)$, which are exactly the cases where $p$ projects to the boundary of~$\B_n^{\flat}$. This proves (c) and (d). Finally, using (a), (c), and Propositions~\ref{prop:prop+} and~\ref{prop:prop-}, we see that the boundary of $\A_{3,n}$ is the disjoint union of several (but not all) sets of the form
\[
\left\{k\begin{pmatrix}s\\s^2\\s^3\end{pmatrix}+\ell\begin{pmatrix}t\\t^2\\t^3\end{pmatrix}\,\middle|\,-1< s<t<1\right\}+a\begin{pmatrix}1\\1\\1\end{pmatrix}+b\begin{pmatrix}-1\\1\\-1\end{pmatrix}
\]
where $k,\ell,a,b\geq0$ have sum $n$. Given a point of the boundary, the number $s$ is unique when $k>0$ and the number $t$ is unique when $\ell>0$. Together with Proposition~\ref{prop_2par}, this shows (e). 
\end{proof}

Finally, we prove the Main Theorem.

\begin{proof}[Proof of the Main Theorem]
Fix a point $(x,y,z)\in\RR^3$. The following conditions are equivalent:
\begin{itemize}
\item[(a)] We have $(x,y)\in\B^{\flat}_n$.
\item[(b)] We have $(x,y)\in\pi(\C^{+}_{k,n-k-1})$ for some $k\in\{1,\dots,n-1\}$.
\item[(c)] We have $(x,y)\in\pi(\C^{-}_{\ell,n-\ell-1})$ for some $\ell\in\{1,\dots,n-1\}$.
\end{itemize}
Take $k,\ell\in\{1,\dots,n-1\}$. Then, using Proposition~\ref{prop:prop+}, we see that
\[
\pi(\C^{+}_{k,n-k-1})=\left\{\binom{x}{y}\in\RR^2\,\middle|\,\begin{array}{l}y\leq n-1+(x+k-(n-k-1))^2\\y\geq n-k-1+(k+1)^{-1}(x-(n-k-1))^2\\y\leq n-k+ k^{-1}(x-(n-k))^2\end{array}\right\}
\]
and we similarly get
\[
\pi(\C^{-}_{\ell,n-\ell-1})=\left\{\binom{x}{y}\in\RR^2\,\middle|\,\begin{array}{l}y\leq n-1+(x-\ell+(n-\ell-1))^2\\y\geq n-\ell-1+(\ell+1)^{-1}(x+(n-\ell-1))^2\\y\leq n-\ell+ \ell^{-1}(x+(n-\ell))^2\end{array}\right\}
\]
using Proposition~\ref{prop:prop-}. Assume that $(x,y)\in\B^{\flat}_n$ and that $k,\ell$ are as in (b) and (c). Using Theorem~\ref{thm:boundaryA3n}(d), we need to find conditions that express that $z^{-}\leq z\leq z^{+}$.
We have
\begin{align*}
f_{k1}(x-(n-k-1),y-(n-k-1),z^{+}-(n-k-1))&=0,\\
(n-k-1)+ \frac{-B_{k1}(x-(n-k-1),y-(n-k-1))}{2A_{k1}}&\leq z^{+}
\end{align*}
by Theorem~\ref{thm:kC+ellC}. So $z\leq z^{+}$ when 
\[
z\leq (n-k-1)+ \frac{-B_{k1}(x-(n-k-1),y-(n-k-1))}{2A_{k1}}=:\theta
\]
or
\[
f_{k1}(x-(n-k-1),y-(n-k-1),z-(n-k-1))\leq0.
\]
Note here that the polynomial $f_{k1}(x,y,-)$ has degree $2$ in $z$, that its leading coefficient is positive, that $z^{+}$ is its highest root and that it attains its minimum at $\theta$. This is visualized in Figure~\ref{fig:fk1}. We also have 
\begin{align*}
f_{1\ell}(x+(n-\ell-1),y-(n-\ell-1),z^{-}+(n-\ell-1))&=0,\\
-(n-\ell-1)+\frac{-B_{1\ell}(x+(n-\ell-1),y-(n-\ell-1))}{2A_{1\ell}}&\geq z^{-}
\end{align*}
by Theorem~\ref{thm:kC+ellC} and from this we conclude that $z\geq z^{-}$ if and only if
\[
z\geq -(n-\ell-1)+\frac{-B_{1\ell}(x+(n-\ell-1),y-(n-\ell-1))}{2A_{1\ell}}
\]
or
\[
f_{1\ell}(x+(n-\ell-1),y-(n-\ell-1),z+(n-\ell-1))\leq0.
\]
This leads to the semi-algebraic description of the Main Theorem.
\end{proof}

\begin{figure}[ht]
  \centering
  \pgfplotsset{every axis/.append style={
      axis x line=middle,
      axis y line=left,
      axis line style={->,very thin},
      xlabel={$z$},
  }}
  \begin{tikzpicture}
    \small
    \begin{axis}[
      every tick/.style={black,very thin},
      width=\axisdefaultwidth,
      height=.55*\axisdefaultheight,
      clip=false,
      xmin=-2.2,xmax=2.2,
      ymin=-2.05,ymax=2.2,
      major tick length=.3cm,
      xtick={0,1.414214},
      xticklabels={},
      ytick={0},
      ]
      \addplot[domain=-2:2,samples=50]({x},{x^2-2})
        node[above right,pos=0.07,xshift=-.2cm]
        {};
      \node[above,yshift=.1cm] at (axis cs:0,0) {$\theta$};
      \draw[dotted] (axis cs:0,0) -- (axis cs:0,-2);
      \node[above,yshift=.1cm] at (axis cs:{2^.5},0) (z+) {$z^{+}$};
      \draw[very thick] (axis cs:-2.3,0) -- (axis cs:0,0);
      \draw[very thick] (axis cs:{2^.5},0) -- (axis cs:0,0);
    \end{axis}
  \end{tikzpicture}
  \caption{Visualization of the condition $z\leq z^{+}$ in the proof of the Main Theorem. The parabola represents the function sending $z$ to $f_{k1}(x-(n-k-1),y-(n-k-1),z-(n-k-1))$.}
  \label{fig:fk1}
\end{figure}

\section{Higher dimensions}\label{sect:future}

The Main Theorem provides a semi-algebraic description of the set $\A_{3,n}$ for each integer~$n\geq3$. So, a natural question to ask is: can we use the same proof strategy to find a semi-algebraic description of the sets $\A_{k,n}$ for $k>3$? At the moment, there still are some obstacles to doing so, which we discuss in this section.\bigskip

Following the same strategy as for $k=3$, we would again start by trying to find a description of the boundary of $\A_{k,n}$. One can check that the statement and proof of Proposition~\ref{prop_2par} carry over in a straightforward fashion for $k>3$, which yields a superset of the boundary. After this, one would again need to exclude points from this superset when they do not in fact lie on the boundary. In view of Theorem~\ref{thm:boundaryA3n}(d), proving that a point in $\A_{3,n}$ does not lie on the boundary can be done by showing that there are points in $\A_{3,n}$ above and below it. So an analogue of Theorem~\ref{thm:boundaryA3n}(d) for higher dimensions would be very useful. This leads to the following conjecture, which holds for $k\leq 3$.

\begin{conj}
Let $(x_1,\dots,x_k)$ be a point in $\A_{k,n}$. Then the set 
\[
\{y\in\RR\mid (x_1,\dots,x_{k-1},y)\in\A_{k,n}\}
\]
is a closed interval.
\end{conj}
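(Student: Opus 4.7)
The plan is to reduce the conjecture to a connectedness statement and then attempt to resolve it via moment-theoretic interpolation. The set $S:=\{y\in\RR\mid (x_1,\dots,x_{k-1},y)\in\A_{k,n}\}$ is nonempty because $x_k\in S$, and is closed and bounded because it is a vertical slice of $\A_{k,n}$, which is compact as the continuous image of $[-1,1]^n$ under the power-sum map $\Phi(t_1,\dots,t_n)=\sum_i(t_i,t_i^2,\dots,t_i^k)$. Since a compact subset of $\RR$ is a closed interval exactly when it is connected, it suffices to show that $S$ is connected.

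Writing $F:=\{t\in[-1,1]^n\mid \sum_{i=1}^n t_i^j=x_j\text{ for }j=1,\dots,k-1\}$ and $p_k(t)=\sum_i t_i^k$, we have $S=p_k(F)$. A first reflex would be to show that $F$ itself is path-connected, but this already fails in low cases: for $(n,k)=(3,3)$ and $(x_1,x_2)=(0,2)$, the set $F$ consists of the six isolated permutations of $(1,-1,0)$, yet $p_k$ happens to be constant on $F$, so $S=\{0\}$ is still a (degenerate) closed interval. This example shows that connectedness of $S$ must be argued without using connectedness of $F$.

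My proposal is to bridge different components of $F$ via convex combinations of the associated atomic measures. Given $t,t'\in F$ with $p_k(t)=y_1\leq y_2=p_k(t')$, consider for $\lambda\in[0,1]$ the measure
\[
\mu_\lambda=(1-\lambda)\sum_{i=1}^n\delta_{t_i}+\lambda\sum_{i=1}^n\delta_{t'_i}
\]
on $[-1,1]$. This has total mass $n$, its first $k-1$ moments are constantly $(x_1,\dots,x_{k-1})$, and its $k$-th moment interpolates linearly from $y_1$ to $y_2$. I would then try to prove the following \emph{discretization lemma}: every nonnegative measure $\mu$ on $[-1,1]$ of total mass $n$ with moments $(x_1,\dots,x_{k-1},y)$ is the weak-$*$ limit of measures of the form $\sum_{i=1}^n\delta_{t_i}$ with the same first $k-1$ moments and $k$-th moment converging to $y$. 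Granting this lemma, the continuity of $p_k$ in the weak-$*$ topology together with the closedness of $S$ forces $[y_1,y_2]\subseteq S$, so $S$ is an interval.

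The main obstacle is clearly the discretization lemma: the classical moment-problem theorems (Chebyshev--Markov, Tchakaloff, Richter) produce atomic representations with \emph{variable} weights and a small number of atoms, whereas we require exactly $n$ atoms of unit weight. I would approach this inductively on $n$: when $\mu$ has an atom of mass $\geq 1$ at some $t_0$, split off a unit mass at $t_0$ and apply the induction hypothesis to $\mu-\delta_{t_0}$; otherwise, perturb two atoms $t_0<t_1$ with fractional weights by redistributing mass along a curve in $\RR^2$ chosen to preserve the first $k-1$ moments, using that the Jacobian of $(t_0,t_1)\mapsto(p_1,\dots,p_{k-1})$ can be arranged to be full rank on a generic pair. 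The delicate part is controlling these perturbations so that no atom leaves $[-1,1]$ and the number of atoms decreases. Alternatively, an explicit deformation argument inside $F$ itself, generalising the five lemmas of Section~\ref{sect:4vsn}, could bypass the weak-$*$ detour, but would require a satisfactory analogue of Theorem~\ref{thm:boundaryA3n} characterising the boundary of $\A_{k,n}$ for $k>3$, which is precisely what is missing.
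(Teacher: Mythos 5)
The paper does not prove this statement: it is posed as an open conjecture, verified only for $k\le3$ (where it follows from Theorem~\ref{thm:boundaryA3n}(d)). So there is no internal proof to compare against, and your proposal must be assessed on its own terms.

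Your reduction to connectedness of the slice $S$ is correct, and your observation that $F$ itself may be disconnected is a good one. But the very example you use to illustrate that point --- $(n,k)=(3,3)$, $(x_1,x_2)=(0,2)$ --- also refutes the discretization lemma on which your whole argument rests. Consider the measure
\[
\mu \;=\; \tfrac{6}{5}\,\delta_{1} \;+\; \tfrac{9}{5}\,\delta_{-2/3}
\]
on $[-1,1]$. It is nonnegative, has total mass $3$, first moment $0$, second moment $2$, and third moment $\tfrac{6}{5}-\tfrac{9}{5}\cdot\tfrac{8}{27}=\tfrac{2}{3}$. Your lemma would assert that $\mu$ is a weak-$*$ limit of measures $\delta_{t_1}+\delta_{t_2}+\delta_{t_3}$ with $t_1+t_2+t_3=0$, $t_1^2+t_2^2+t_3^2=2$ and third moments converging to $\tfrac23$; together with the closedness of $S$ this would give $\tfrac23\in S$. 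But, as you note, every $(t_1,t_2,t_3)\in[-1,1]^3$ with those first two power sums is a permutation of $(1,-1,0)$, so every such measure has third moment exactly $0$ and $S=\{0\}$. Hence the discretization lemma is false already in the smallest nontrivial case, and the proof plan breaks at step~4. The conceptual reason is that your lemma would force $S$ to coincide with the corresponding slice of the truncated moment cone of \emph{arbitrary} mass-$n$ nonnegative measures on $[-1,1]$, and that cone is strictly larger: the integrality constraint (exactly $n$ atoms, each of unit weight) cuts out a genuinely smaller set, and the weak-$*$ interpolation through $\mu_\lambda$ throws that information away. Any valid proof of the conjecture will have to retain control of the atom count and weights along the deformation, which is precisely what the inductive redistribution argument you sketch at the end would need to accomplish --- but as stated it inherits the same obstruction, since splitting off a unit atom from $\mu$ above already fails (no atom of $\mu$ has mass $\ge1$, and no two-atom redistribution preserving mass and two moments can reach a three-atom unit-weight configuration here).

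Your fallback remark --- that an explicit deformation generalising the lemmas of Section~\ref{sect:4vsn} would require a characterisation of the boundary of $\A_{k,n}$ for $k>3$, which is exactly what is missing --- is an accurate summary of where the difficulty lies, and is consistent with the paper's framing of this as an open problem.
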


Another approach might be through a generalization of Theorem~\ref{thm:boundaryA3n}(e). The boundary of $\A_{k,n}$ is contained in the union of the sets
\[
\left\{\ell_1\begin{pmatrix}t_1\\t_1^2\\\vdots\\t_1^k\end{pmatrix}+\dots+\ell_{k-1}\begin{pmatrix}t_{k-1}\\t_{k-1}^2\\\vdots\\t_{k-1}^k\end{pmatrix}\,\middle|\,-1<t_1<\dots< t_{k-1}<1\right\}+a\begin{pmatrix}1\\1\\\vdots\\1\end{pmatrix}+b\begin{pmatrix}-1\\(-1)^2\\\vdots\\(-1)^k\end{pmatrix}
\]
over all integers $\ell_1,\dots,\ell_{k-1},a,b\geq0$ that sum to $n$. The uniqueness of the representation of each point on the boundary would imply that the boundary of $\A_{k,n}$ is a disjoint union of some of these sets. It would also be a tool to eliminate some of these sets from consideration. This leads to our second conjecture, which also holds for $k\leq 3$.

\begin{conj}
Every point on the boundary of $\A_{k,n}$ can be written as 
\[
\begin{pmatrix}t_1\\t_1^2\\\vdots\\t_1^k\end{pmatrix}+\dots+\begin{pmatrix}t_n\\t_n^2\\\vdots\\t_n^k\end{pmatrix}
\]
for some tuple $(t_1,\dots,t_n)\in[-1,1]^n$. The set $\{t_1,\dots,t_n\}\setminus\{-1,1\}$ has at most $k-1$ elements and the tuple $(t_1,\dots,t_n)$ is unique up to permutation of its entries.
\end{conj}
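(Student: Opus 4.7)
The existence part of the conjecture, together with the bound ``at most $k-1$ elements,'' generalizes Proposition~\ref{prop_2par} essentially verbatim. Writing $T(t)=(t,t^2,\dots,t^k)$, suppose a boundary point $p=\sum_{i=1}^n T(t_i)$ has indices $i_1,\dots,i_k$ with $t_{i_1},\dots,t_{i_k}\in(-1,1)$ pairwise distinct. The map varying just these $k$ coordinates has Jacobian at $(t_{i_1},\dots,t_{i_k})$ equal to $\mathrm{diag}(1,2,\dots,k)$ times the Vandermonde matrix on $t_{i_1},\dots,t_{i_k}$, hence invertible. By the inverse function theorem, $p$ admits a neighborhood in $\A_{k,n}$, contradicting the boundary hypothesis. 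Therefore every representation of a boundary point uses at most $k-1$ distinct values in $(-1,1)$.

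For uniqueness, suppose $(t_1,\dots,t_n)$ and $(s_1,\dots,s_n)$ both represent the boundary point $p$. The signed measure $\mu=\sum_i\delta_{t_i}-\sum_i\delta_{s_i}$ on $[-1,1]$ then has support of size at most $2(k-1)+2=2k$ and satisfies $\int t^j\,d\mu=0$ for $j=0,1,\dots,k$. When the support of $\mu$ has at most $k+1$ points, the $(k+1)\times m$ matrix whose entries are powers of the support points is a full-rank Vandermonde matrix (its $m\leq k+1$ columns are linearly independent), so the weight vector is forced to be zero and the two representations agree. The genuinely difficult regime is when the two representations together involve more than $k-1$ distinct interior values and $\mu$ is supported on between $k+2$ and $2k$ points, where the moment equations alone leave positive-dimensional freedom.

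To resolve that regime, the plan is to invoke the boundary hypothesis a second time. For each representation, the cone of feasible infinitesimal variations at its preimage cannot span $\RR^k$ (else IFT places $p$ in the interior), so there exists a nonzero covector $w\in\RR^k$ such that the polynomial $g(t)=w_1t+\dots+w_kt^k$ satisfies $g'(u)=0$ at every interior value $u$ of that representation, together with $g'(1)\geq 0$ if $+1$ is used and $g'(-1)\leq 0$ if $-1$ is used. Running this for both representations produces polynomials $g_1,g_2$ of degree at most $k$ whose critical sets contain the interior values, severely constraining the geometry of $\mathrm{supp}(\mu)$. The goal is to combine these critical-set constraints with a Markov--Krein style sign-alternation principle for signed measures against the Chebyshev system $\{1,t,\dots,t^k\}$ to show that no nontrivial $\mu$ can simultaneously satisfy the moment equations and the sign pattern dictated by the two representations.

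The main obstacle is the interaction between $g_1$ and $g_2$: a priori these covectors need not be proportional, since the local normal structure of $\A_{k,n}$ at $p$ can be higher-dimensional and the set is generally non-convex, so the critical sets of $g_1$ and $g_2$ can be genuinely different. Controlling this discrepancy, and exploiting the integrality of the multiplicities $m_i,m'_j$ which the linear tangent-cone argument does not see, is the technical heart of the problem and explains why for $k>3$ only a conjecture is stated.
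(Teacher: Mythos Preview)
The statement you are addressing is labeled a \emph{conjecture} in the paper and is not proved there; the authors merely note that it holds for $k\leq 3$, where it is Theorem~\ref{thm:boundaryA3n}(e), and remark that the ``at most $k-1$ elements'' clause follows by the obvious generalization of Proposition~\ref{prop_2par}. Your first paragraph reproduces exactly that generalization and is correct; the paper agrees this part is routine. The substantive content of the conjecture is the uniqueness clause, and neither the paper nor your proposal proves it---you say so yourself in the last paragraph.

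On your sketched approach to uniqueness, one step is not justified as written. You assert that because $p$ is a boundary point, ``the cone of feasible infinitesimal variations at its preimage cannot span $\RR^k$ (else IFT places $p$ in the interior).'' This inference is false: at a smooth boundary point of a full-dimensional body the tangent cone is a half-space, which spans $\RR^k$, yet the point is not interior. The inverse function theorem needs an interior domain point with full-rank Jacobian, not merely a spanning tangent cone. The existence of a covector $w$ with $g'(u_i)=0$ at the interior values $u_i$ does hold, but for the simpler reason that there are at most $k-1$ such $u_i$ and the $T'(u_i)$ span a subspace of dimension at most $k-1$; the sign conditions $g'(1)\geq 0$, $g'(-1)\leq 0$ you want at the endpoints do not follow from this linear-algebra count and would require a genuine supporting-hyperplane or normal-cone argument, which in turn presupposes some convexity or one-sidedness that is not available for $\A_{k,n}$ in general. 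So even as an outline, the third paragraph overstates what the boundary hypothesis delivers, and the honest situation is as you conclude: for $k>3$ the uniqueness remains open.
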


Apart from finding the boundary of $\A_{k,n}$, there is also the problem of describing it semi-algebraically. When one would attempt this, the main obstacle to overcome is, in our opinion, finding an analogue of Theorem~\ref{thm:kC+ellC}. For $k=4$, this means we need to solve the following problem.

\begin{prb}\label{prb:a43}
Determine a semi-algebraic description of the set
\[
\left\{ \ell_1 \begin{pmatrix}t_1\\t_1^2\\t_1^3\\t_1^4\end{pmatrix}+ \ell_2 \begin{pmatrix}t_2\\t_2^2\\t_2^3\\t_2^4\end{pmatrix}+ \ell_3 \begin{pmatrix}t_3\\t_3^2\\t_3^3\\t_3^4\end{pmatrix}\,\middle|\, -1\le t_1\le t_2\le t_3\le 1 \right\}
\]
given the integers $\ell_1,\ell_2,\ell_3\ge 1$.
\end{prb}

These sets are expected to be the building blocks for the boundary of $\A_{4,n}$, so a solution to this problem seems essential if we want to apply the same approach we used for $\A_{3,n}$. Using elimination theory, we find that the Zariski closure of this set is a hypersurface defined by a single polynomial $f_{\ell_1,\ell_2,\ell_3}(x_1,x_2,x_3,x_4)$. This polynomial is homogeneous of degree $24$ with respect to the grading where $\deg(x_i)=i$ and has $169$ terms. Its coefficients are symmetric polynomials in $\ell_1,\ell_2,\ell_3$ of degree up to $18$.
When $\#\{\ell_1,\ell_2,\ell_3\}\leq 2$, the polynomial is a square. And, we have 
\[
f_{\ell,\ell,\ell}(x_1,x_2,x_3,x_4)=(-x_1^4 +6\ell x_1^2x_2 -3\ell^2 x_2^2 -\ell^2 8x_1x_3 +6\ell^3 x_4)^6.
\]
This suggests that we should first solve
\[
\begin{pmatrix}x_1\\x_2\\x_3\end{pmatrix}=\ell_1 \begin{pmatrix}t_1\\t_1^2\\t_1^3\end{pmatrix}+ \ell_2 \begin{pmatrix}t_2\\t_2^2\\t_2^3\end{pmatrix}+ \ell_3 \begin{pmatrix}t_3\\t_3^2\\t_3^3\end{pmatrix}
\]
for $t_1,t_2,t_3$ and then solve $f_{\ell_1,\ell_2,\ell_3}(x_1,x_2,x_3,x_4)=0$ for $x_4$. As this only involves solving polynomial equations of degree $\leq4$, this is theoretically doable. The problem however is to express the inequalities $-1\le t_1\le t_2\le t_3\le 1$ as polynomial inequalities in $x_1,x_2,x_3,x_4$.\bigskip

As an example, consider the case $\ell_1=\ell_2=\ell_3=1$. In this case, the set is contained in the hypersurface given by the equation
\[
x_1^4 - 6 x_1^2 x_2 + 3 x_2^2 + 8 x_1 x_3 - 6 x_4=0,
\]
which allows to eliminate the coordinate $x_4$. So here, the problem consists of finding a semi-algebraic description of the set
\[
\left\{\begin{pmatrix}x_1\\x_2\\x_3\end{pmatrix}=\begin{pmatrix}t_1\\t_1^2\\t_1^3\end{pmatrix}+\begin{pmatrix}t_2\\t_2^2\\t_2^3\end{pmatrix}+\begin{pmatrix}t_3\\t_3^2\\t_3^3\end{pmatrix}\,\middle|\, -1\le t_1\le t_2\le t_3\le 1 \right\}
\]
given $x_1,x_2,x_3\in\RR$.\bigskip

If we can solve Problem~\ref{prb:a43}, we still need to find analogues for the results in Section~\ref{sect:B+-}. These results rely on our complete understanding of the roots and extrema of parabolas. So to generalize these results, we probably need a similar level of understanding in the cases of cubics and quartics, which for now seems to be out of reach.

\end{document}